\newtheorem{definition}{Definition}[section]
\newtheorem{lemma}{Lemma}[section]
\newtheorem{theorem}{Theorem}[section]
\newtheorem{corollary}{Corollary}[section]
\newtheorem{proposition}{Proposition}[section]
\newtheorem{example}{Example}[section]
\newtheorem{remark}{Remark}[section]
\numberwithin{equation}{section}
\newcommand{\sref}[1]{Section~\ref{#1}}
\newcommand{\ssref}[1]{Subsection~\ref{#1}}
\newcommand{\tref}[1]{\textsl{Theorem~\ref{#1}}}
\newcommand{\lref}[1]{\textsl{Lemma~\ref{#1}}}
\newcommand{\cref}[1]{\textsl{Corollary~\ref{#1}}}
\newcommand{\pref}[1]{\textsl{Proposition~\ref{#1}}}
\newcommand{\rref}[1]{\textsl{Remark~\ref{#1}}}
\newcommand{\eref}[1]{\textsc{(\ref{#1})}}
\newcommand{\norm}[1]{\left\Vert#1\right\Vert}
\newcommand{\set}[1]{\left\{#1\right\}}
\newcommand{\R}{\mathds R}
\newcommand{\ol}{\overline}
\newcommand{\ul}{\underline}
\newcommand{\p}{\partial}
\newcommand{\dps}{\displaystyle}
\newcommand{\wt}{\widetilde}
\newcommand{\ra}{\rightarrow}
\newcommand{\Ra}{\Rightarrow}
\begin{document}

\title{On the exterior Dirichlet problem
for Hessian quotient equations\footnotemark[1]}


\author{Dongsheng Li \footnotemark[3]
\and Zhisu Li (\Letter)\footnotemark[2]~\footnotemark[3]}

\renewcommand{\thefootnote}{\fnsymbol{footnote}}

\footnotetext[1]{This research is supported by NSFC.11671316.}
\footnotetext[2]{Corresponding author.}
\footnotetext[3]{School of Mathematics and Statistics,
Xi'an Jiaotong University, Xi'an 710049, China;\\
Dongsheng Li: \href{mailto:lidsh@mail.xjtu.edu.cn}{lidsh@mail.xjtu.edu.cn};
Zhisu Li: \href{mailto:lizhisu@stu.xjtu.edu.cn}{lizhisu@stu.xjtu.edu.cn}.}

\date{}


\maketitle

\begin{abstract}
In this paper, we establish
the existence and uniqueness theorem
for solutions of the exterior Dirichlet problem
for Hessian quotient equations
with prescribed asymptotic behavior at infinity.
This extends the previous related results on
the Monge-Amp\`{e}re equations and on the Hessian equations,
and rearranges them in a systematic way.
Based on the Perron's method,
the main ingredient of this paper is
to construct some appropriate subsolutions
of the Hessian quotient equation,
which is realized by
introducing some new quantities about
the elementary symmetric functions
and using them to analyze the corresponding
ordinary differential equation related to the
generalized radially symmetric subsolutions
of the original equation.
\end{abstract}

\noindent\emph{Keywords:}
{Dirichlet problem, existence and uniqueness,
exterior domain, Hessian quotient equation,
Perron's method, prescribed asymptotic behavior,
viscosity solution}

\noindent\emph{2010 MSC:}
{35D40, 35J15, 35J25, 35J60, 35J96}



\section{Introduction}

In this paper, we consider the Dirichlet problem for
the Hessian quotient equation
\begin{equation}\label{eqn.hqe}
\frac{\sigma_k(\lambda(D^2u))}{\sigma_l(\lambda(D^2u))}=1
\end{equation}
in the exterior domain $\R^n\setminus\ol{D}$,
where $D$ is a bounded domain in $\R^n$,
$n\geq3$, $0\leq l<k\leq n$,
$\lambda(D^2u)$ denotes the eigenvalue vector
$\lambda:=(\lambda_1,\lambda_2,...,\lambda_n)$
of the Hessian matrix $D^2u$ of the function $u$,
and
\[\sigma_0(\lambda)\equiv1
\quad\text{and}\quad
\sigma_j(\lambda):=\sum_{1\leq s_1<s_2<...<s_j\leq n}
\lambda_{s_1}\lambda_{s_2}...\lambda_{s_j}
~(\forall 1\leq j\leq n)\]
are the elementary symmetric functions
of the $n$-vector $\lambda$.
Note that when $l=0$,
\eref{eqn.hqe} is the Hessian equation
$\sigma_k(\lambda(D^2u))=1$;
when $l=0$, $k=1$,
it is the Poisson equation $\Delta u=1$,
a linear elliptic equation;
when $l=0$, $k=n$,
it is the famous Monge-Amp\`{e}re equation
$\det(D^2u)=1$;
and when $l=1$, $k=3$, $n=3$ or $4$,
it is the special Lagrangian equation
$\sigma_1(\lambda(D^2u))=\sigma_3(\lambda(D^2u))$
in three or four dimension
(in three dimension, this is $\det(D^2u)=\Delta u$ indeed)
which arises from the special Lagrangian geometry \cite{HL82}.

For linear elliptic equations of second order,
there have been much extensive studies
on the exterior Dirichlet problem,
see \cite{MS60} and the references therein.
For the Monge-Amp\`{e}re equation,
a classical theorem of J\"{o}rgens \cite{Jor54},
Calabi \cite{Cal58} and Pogorelov \cite{Pog72}
states that any convex classical solution of
$\det(D^2u)=1$ in $\R^n$ must be a quadratic polynomial.
Related results was also given by
\cite{CY86}, \cite{Caf95}, \cite{TW00} and \cite{JX01}.
Caffarelli and Li \cite{CL03} extended
the J\"{o}rgens-Calabi-Pogorelov theorem
to exterior domains.
They proved that if $u$ is a convex viscosity solution of
$\det(D^2u)=1$ in the exterior domain $\R^n\setminus\ol{D}$,
where $D$ is a bounded domain in $\R^n$, $n\geq3$,
then there exist
$A\in\R^{n\times n},b\in\R^n$ and $c\in\R$
such that
\begin{equation}\label{eqn.abc-n}
\limsup_{|x|\ra+\infty}|x|^{n-2}
\left|u(x)-\left(\frac{1}{2}x^{T}Ax+b^Tx+c\right)\right|<\infty.
\end{equation}
With such prescribed asymptotic behavior at infinity,
they also established an existence and uniqueness theorem for
solutions of the Dirichlet problem
of the Monge-Amp\`{e}re equation
in the exterior domain of $\R^n$, $n\geq3$.
See \cite{FMM99}, \cite{FMM00} or \cite{Del92}
for similar problems in two dimension.
Recently, J.-G. Bao, H.-G. Li and Y.-Y. Li \cite{BLL14}
extended the above existence and uniqueness theorem
of the exterior Dirichlet problem in \cite{CL03}
for the Monge-Amp\`{e}re equation
to the Hessian equation
$\sigma_k(\lambda(D^2u))=1$ with $2\leq k\leq n$
and with some appropriate
prescribed asymptotic behavior at infinity
which is modified from \eref{eqn.abc-n}.
Before them, for the special case that $A=c_0 I$
with $c_0:=({C_n^k})^{-1/k}$ and $C_n^k:={n!}/(k!(n-k)!)$,
the exterior Dirichlet problem for the Hessian equation
has been investigated by Dai and Bao in \cite{DB11}.
At the same time,
Dai \cite{Dai11} proved the existence theorem
of the exterior Dirichlet problem
for the Hessian quotient equation
\eref{eqn.hqe} with $k-l\geq3$,
and with the prescribed asymptotic behavior at infinity
of the special case that $A=c_\ast I$, that is,
\begin{equation}\label{eqn.cc-kl}
\limsup_{|x|\ra+\infty}|x|^{k-l-2}
\left|u(x)-\left(\frac{c_\ast}{2}|x|^2+c\right)\right|<\infty,
\end{equation}
where
\begin{equation}\label{eqn.cst}
c_\ast:=\left(\frac{C_n^l}{C_n^k}\right)^{\frac{1}{k-l}}
~\text{with}~C_n^i:=\frac{n!}{i!(n-i)!}
~\text{for}~i=k,l.
\end{equation}
As they pointed out in \cite{LB14} that the restriction
$k-l\geq3$ rules out an important example,
the special Lagrangian equation
$\det(D^2u)=\Delta u$ in three dimension.
Later, \cite{LD12} improve the result in \cite{Dai11}
for \eref{eqn.hqe} with $k-l\geq3$
to that for \eref{eqn.hqe} with $0\leq l<k\leq(n+1)/2$.
More recently, Li and Bao \cite{LB14}
established the existence theorem
of the exterior Dirichlet problem
for a class of fully nonlinear elliptic equations
related to the eigenvalues of the Hessian
which include the Monge-Amp\`{e}re equations,
Hessian equations, Hessian quotient equations
and the special Lagrangian equations
in dimension equal and lager than three,
but with the prescribed asymptotic behavior at infinity
only in the special case of \eref{eqn.abc-n} that $A=c^\ast I$
with $c^\ast$ some appropriate constant,
like \eref{eqn.cc-kl} and \eref{eqn.cst}.

\medskip

In this paper, we focus our attention on
the Hessian quotient equation \eref{eqn.hqe}
and establish the existence and uniqueness theorem
for the exterior Dirichlet problem of it
with prescribed asymptotic behavior at infinity
of the type similar to \eref{eqn.abc-n}.
This extends the previous corresponding results
on the Monge-Amp\`{e}re equations \cite{CL03}
and on the Hessian equations \cite{BLL14}
to Hessian quotient equations,
and also extends those results
on the Hessian quotient equations
in \cite{Dai11}, \cite{LD12} and \cite{LB14}
to be valid for general
prescribed asymptotic behavior condition at infinity.
Since we do not restrict ourselves to
the case $k-l\geq3$ or $0\leq l<k\leq(n+1)/2$ only,
our theorems also apply to the special Lagrangian equations
$\det(D^2u)=\Delta u$ in three dimension
and $\sigma_1(\lambda(D^2u))=\sigma_3(\lambda(D^2u))$
in four dimension.
Indeed, we will show in our forthcoming paper \cite{LL16}
that our method still works very well for
the special Lagrangian equations with higher dimension
and with general phase.

We would like to remark that,
for the interior Dirichlet problems
there have been much extensive studies,
see for example \cite{CIL92}, \cite{CNS85},
\cite{Ivo85}, \cite{Kry83}, \cite{Urb90},
\cite{Tru90} and \cite{Tru95};
see \cite{BCGJ03} and the references given there
for more on the Hessian quotient equations;
and for more on the special Lagrangian equations,
we refer the reader to \cite{HL82}, \cite{Fu98},
\cite{Yuan02}, \cite{CWY09}
and the references therein.

\medskip

For the reader's convenience, we give the following definitions
related to Hessian quotient equation
(see also \cite{CIL92}, \cite{CC95}, \cite{CNS85},
\cite{Tru90}, \cite{Tru95} and the references therein).

We say that a function $u\in C^2\left(\R^n\setminus\ol{D}\right)$
is \emph{$k$-convex},
if $\lambda(D^2u)\in\ol{\Gamma_k}$ in $\R^n\setminus\ol{D}$,
where $\Gamma_k$ is the connected component of
$\set{\lambda\in\R^n|\sigma_k(\lambda)>0}$
containing the \emph{positive cone}
\[\Gamma^+:=\set{\lambda\in\R^n|\lambda_i>0,\forall i=1,2,...,n}.\]
It is well known that
$\Gamma_k$ is an open convex symmetric cone
with its vertex at the origin
and that
\[\Gamma_k=\set{\lambda\in\R^n|\sigma_j(\lambda)>0,
\forall j=1,2,...,k},\]
which implies
\[\set{\lambda\in\R^n|\lambda_1+\lambda_2+...+\lambda_n>0}
=\Gamma_1\supset...\supset\Gamma_k\supset\Gamma_{k+1}
\supset...\supset\Gamma_n=\Gamma^+\]
with the first term $\Gamma_1$ the half space
and with the last term $\Gamma_n$
the positive cone $\Gamma^+$.
Furthermore, we also know that
\begin{equation}\label{eqn.pisjp}
\p_{\lambda_i}\sigma_{j}(\lambda)>0,
~\forall 1\leq i\leq n,
~\forall 1\leq j\leq k,
~\forall\lambda\in\Gamma_k,
~\forall 1\leq k\leq n
\end{equation}
(see \cite{CNS85} or \cite{Urb90} for more details).

Let $\Omega$ be an open domain in $\R^n$
and let $f\in C^0(\Omega)$ be nonnegative.
Suppose $0\leq l<k\leq n$.
A function $u\in C^0(\Omega)$ is said to be
a \emph{viscosity subsolution} of
\begin{equation}\label{eqn.hqe-f}
\frac{\sigma_k(\lambda(D^2u))}{\sigma_l(\lambda(D^2u))}=f
\quad\text{in}~\Omega
\end{equation}
\big{(}or say that $u$ satisfies
\[\frac{\sigma_k(\lambda(D^2u))}{\sigma_l(\lambda(D^2u))}\geq f
\quad\text{in}~\Omega\]
in the viscosity sense, similarly hereinafter\big{)},
if for any function $v\in C^2(\Omega)$
and any point $x^\ast\in\Omega$ satisfying
\[v(x)\geq u(x),~\forall x\in\Omega
\quad\text{and}\quad v(x^\ast)=u(x^\ast),\]
we have
\[\frac{\sigma_k(\lambda(D^2v))}{\sigma_l(\lambda(D^2v))}\geq f
\quad\text{in}~\Omega.\]
A function $u\in C^0(\Omega)$ is said to be
a \emph{viscosity supersolution} of \eref{eqn.hqe-f}
if for any \textit{$k$-convex function} $v\in C^2(\Omega)$
and any point $x^\ast\in\Omega$ satisfying
\[v(x)\leq u(x),~\forall x\in\Omega
\quad\text{and}\quad v(x^\ast)=u(x^\ast),\]
we have
\[\frac{\sigma_k(\lambda(D^2v))}{\sigma_l(\lambda(D^2v))}\leq f
\quad\text{in}~\Omega.\]
A function $u\in C^0(\Omega)$ is said to be a
\emph{viscosity solution} of \eref{eqn.hqe-f},
if it is both a viscosity subsolution
and a viscosity supersolution of \eref{eqn.hqe-f}.
A function $u\in C^0(\Omega)$ is said to be a
\emph{viscosity subsolution}
(respectively, \emph{supersolution, solution})
of \eref{eqn.hqe-f} and $u=\varphi$ on $\p\Omega$
with some $\varphi\in C^0(\p\Omega)$,
if $u$ is a viscosity subsolution
(respectively, supersolution, solution)
of \eref{eqn.hqe-f} and
$u\leq$(respectively, $\geq,=$)$\varphi$ on $\p\Omega$.

Note that in the definitions of viscosity solution above,
we have used the ellipticity of the Hessian quotient equations indeed.
For completeness and convenience, this will be proved
in the end of \ssref{subsec.prelem}.
See also \cite{CC95}, \cite{CIL92}, \cite{CNS85},
\cite{Urb90} and the references therein.

\bigskip

Define
\[\mathscr{A}_{k,l}:=\left\{A\in S(n)\big{|}\lambda(A)\in\Gamma^+,
\sigma_k(\lambda(A))=\sigma_l(\lambda(A))\right\}.\]
Note that there are plenty of elements in $\mathscr{A}_{k,l}$.
In fact, for any $A\in S(n)$ with $\lambda(A)\in\Gamma^+$, if we set \[\varrho:=\left(\frac{\sigma_k(\lambda(A))}{\sigma_l(\lambda(A))}\right)
^{-\frac{1}{k-l}},\]
we then have $\varrho A\in\mathscr{A}_{k,l}$.
Let
\[\mathscr{\wt A}_{k,l}
:=\set{A\in\mathscr{A}_{k,l}\big{|}m_{k,l}(\lambda(A))>2},\]
where $m_{k,l}(\lambda)$ is a quantity
which plays an important role in this paper.
We will give the specific definition of $m_{k,l}(\lambda)$
in \eref{eqn.mkla} in \ssref{subsec.xi},
and verify there that $\mathscr{\wt A}_{k,l}$
possesses the following fine properties.
\begin{proposition}\label{prop.wtakl}
Suppose $0\leq l<k\leq n$ and $n\geq3$.
\begin{enumerate}[\quad(1)]
\item
If $k-l\geq2$,
then
$\mathscr{\wt A}_{k,l}=\mathscr{A}_{k,l}$.

\item
$\mathscr{\wt A}_{n,0}=\mathscr{A}_{n,0}$
and $m_{n,0}\equiv n$.

\item
$c_\ast I\in\mathscr{\wt A}_{k,l}$
and $m_{k,l}(c_\ast(1,1,...,1))=n$,
where $c_\ast$ is the one defined in \eref{eqn.cst}.
\end{enumerate}
\end{proposition}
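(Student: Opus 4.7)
The plan is to dispose of the three parts separately, each by exploiting the explicit formula for $m_{k,l}(\lambda)$ to be introduced in \eqref{eqn.mkla}. Parts (2) and (3) reduce to direct computations, whereas part (1) will require a Newton--Maclaurin type inequality and is the main obstacle.

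For part (3), the inclusion $c_\ast I\in\mathscr{A}_{k,l}$ follows immediately from
\[
\frac{\sigma_k(c_\ast,\ldots,c_\ast)}{\sigma_l(c_\ast,\ldots,c_\ast)}
= \frac{C_n^k c_\ast^{k}}{C_n^l c_\ast^{l}}
= \frac{C_n^k}{C_n^l}\cdot c_\ast^{k-l} = 1,
\]
by the choice of $c_\ast$ in \eqref{eqn.cst}. Plugging the constant vector $\lambda=c_\ast(1,\ldots,1)$ into the formula for $m_{k,l}$ and using the symmetry relation $\p_{\lambda_i}\sigma_j(\lambda)=C_{n-1}^{j-1}c_\ast^{j-1}$ (independent of $i$, as follows from Euler's identity $\sum_p\lambda_p\p_{\lambda_p}\sigma_j=j\sigma_j$ together with symmetry in the $\lambda_i$), all ratios of partial derivatives appearing in $m_{k,l}$ should telescope to the single value $n$. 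This simultaneously gives $m_{k,l}(c_\ast(1,\ldots,1))=n$ and, since $n\geq 3>2$, the membership $c_\ast I\in\mathscr{\wt A}_{k,l}$.

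For part (2), the case $l=0$ makes $\sigma_l\equiv1$ and renders all of its partial derivatives zero, so the formula for $m_{n,0}(\lambda)$ should collapse to a purely symmetric expression that, once the constraint $\sigma_n(\lambda)=1$ defining $\mathscr{A}_{n,0}$ is used, reduces identically to the constant $n$; this yields $\mathscr{\wt A}_{n,0}=\mathscr{A}_{n,0}$. Part (1) is the principal obstacle: one must establish $m_{k,l}(\lambda)>2$ for every $\lambda\in\Gamma^+$ with $\sigma_k(\lambda)=\sigma_l(\lambda)$, under the gap hypothesis $k-l\geq 2$. My plan is to rearrange $m_{k,l}(\lambda)$ into a sum of normalized ratios such as $\sigma_{j+1}(\lambda)\sigma_{j-1}(\lambda)/\sigma_j(\lambda)^{2}$ for $l\leq j\leq k-1$, and then invoke the Newton inequalities on $\Gamma^+$ to bound each ratio from below. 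The gap $k-l\geq 2$ should enter precisely as the requirement that at least two such ratios appear, which is what is needed to push the total strictly past the threshold $2$; the borderline case $k-l=1$, excluded in (1), is expected to be genuinely sharp.
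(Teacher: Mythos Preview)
Your plan for parts (2) and (3) will produce the right answers, but it is built on guesses about the shape of $m_{k,l}$ rather than its actual definition. Once you write out \eqref{eqn.mkla},
\[
m_{k,l}(a)=\frac{k-l}{\ol\xi_k(a)-\ul\xi_l(a)},
\]
both parts become one-line computations using Lemma~\ref{lem.xik}: since $\ol\xi_n\equiv1$ and $\ul\xi_0\equiv0$, one has $m_{n,0}\equiv n$, and no use of the constraint $\sigma_n(\lambda)=1$ is needed; similarly, at a constant vector $\ol\xi_j=\ul\xi_j=j/n$ for every $j$, so $m_{k,l}(c_\ast(1,\dots,1))=(k-l)/((k-l)/n)=n$. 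Your talk of ``ratios of partial derivatives'' telescoping and of invoking $\sigma_n=1$ suggests you have not yet unpacked the definition; do that first.

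For part (1) your approach is misdirected. The quantity $m_{k,l}$ is \emph{not} a sum of Newton ratios $\sigma_{j+1}\sigma_{j-1}/\sigma_j^2$, and there is no rearrangement of \eqref{eqn.mkla} into such a sum, so the plan to bound ``at least two such ratios'' using Newton--Maclaurin cannot get started. The paper's argument (packaged in Corollary~\ref{cor.m}) is much simpler: from Lemma~\ref{lem.xik} one has $0\leq\ul\xi_l(a)$ and $\ol\xi_k(a)\leq1$, so $\ol\xi_k-\ul\xi_l\leq1$ and hence $m_{k,l}\geq k-l$. Strict inequality holds unless simultaneously $\ul\xi_l=0$ and $\ol\xi_k=1$, i.e.\ unless $(k,l)=(n,0)$; in that exceptional case $m_{n,0}=n\geq3>2$ anyway. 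Thus $k-l\geq2$ already forces $m_{k,l}>2$, with Newton's inequality playing no role at this step (it is used earlier, inside Lemma~\ref{lem.xik}, only for the monotonicity $\ol\xi_k\leq\ol\xi_{k+1}$, which is not needed here).
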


\medskip

The main result of this paper now can be stated as below.
\begin{theorem}\label{thm.hqe}
Let $D$ be a bounded strictly convex domain
in $\R^n$, $n\geq 3$, $\p D\in C^2$
and let $\varphi\in C^2(\p D)$.
Then for any given $A\in\mathscr{\wt A}_{k,l}$
with $0\leq l<k\leq n$, and
any given $b\in \R^n$,
there exists a constant $\tilde{c}$
depending only on $n,D,k,l,A,b$
and $\norm{\varphi}_{C^2(\p D)}$,
such that for every $c\geq\tilde{c}$,
there exists a unique viscosity solution
$u\in C^0(\R^n\setminus D)$ of
\begin{equation}\label{eqn.hqe-abc}
\left\{
\begin{aligned}
\dps \frac{\sigma_k(\lambda(D^2u))}{\sigma_l(\lambda(D^2u))}&=1
\quad\text{in}~\R^n\setminus\ol{D},\\
\dps u&=\varphi\quad\text{on}~\p D,\\
\dps \limsup_{|x|\ra+\infty}|x|^{m-2}
&\left|u(x)-\left(\frac{1}{2}x^{T}Ax+b^Tx+c\right)\right|<\infty,
\end{aligned}
\right.
\end{equation}
where $m\in(2,n]$ is a constant depending
only on $n,k,l$ and $\lambda(A)$,
which actually can be taken as $m_{k,l}(\lambda(A))$.
\end{theorem}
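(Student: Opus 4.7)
My approach is Perron's method in the spirit of \cite{CL03} and \cite{BLL14}. Since $A\in\mathscr{A}_{k,l}$, the quadratic polynomial $Q(x):=\tfrac12 x^TAx+b^Tx+c$ already satisfies $\sigma_k(\lambda(D^2Q))/\sigma_l(\lambda(D^2Q))=\sigma_k(\lambda(A))/\sigma_l(\lambda(A))=1$ everywhere, so it is itself a smooth (hence viscosity) solution of \eref{eqn.hqe} in all of $\R^n$ and will serve as a natural upper envelope. The bulk of the work is then to construct a viscosity subsolution $\underline u\in C^0(\R^n\setminus D)$ that agrees with $\varphi$ on $\p D$, lies below $Q$ outside $\ol D$, and satisfies $\underline u(x)-Q(x)=O(|x|^{-(m-2)})$ at infinity with $m=m_{k,l}(\lambda(A))$.

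By the orthogonal invariance of \eref{eqn.hqe} I may first reduce to the case $A=\mathrm{diag}(\lambda_1,\ldots,\lambda_n)$ with $\lambda=\lambda(A)\in\Gamma^+$ and $\sigma_k(\lambda)=\sigma_l(\lambda)$; the linear term $b^Tx$ contributes nothing to $D^2$ and is carried along trivially. In the spirit of the ``generalized radially symmetric'' subsolutions mentioned in the abstract I then look for $v$ of the form
\[v(x)=\tfrac12 x^TAx+b^Tx+\psi\bigl(\xi(x)\bigr),\]
where $\xi$ is an $A$-adapted quasi-radius (its level sets are ellipsoids whose axes align with the eigendirections of $A$) and $\psi$ is a one-variable profile to be chosen. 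The matrix $D^2v-A$ is then a low-rank perturbation of $A$ whose nonzero contributions are linear in $\psi'$ and $\psi''$; expanding $\sigma_k$ and $\sigma_l$ around $\lambda(A)$ via their derivatives (and using the positivity \eref{eqn.pisjp} on $\Gamma_k$) converts the inequality $\sigma_k(\lambda(D^2v))\ge\sigma_l(\lambda(D^2v))$ into a first-order ODE inequality for $\psi'$ whose coefficients are rational combinations of the $\p_{\lambda_i}\sigma_{k-1}(\lambda)$ and $\p_{\lambda_i}\sigma_{l-1}(\lambda)$. The quantity $m_{k,l}(\lambda)$ introduced in \ssref{subsec.xi} is designed precisely so that the admissible decay of $\psi$ at infinity is $\xi^{-(m_{k,l}(\lambda)-2)/2}$; the hypothesis $A\in\mathscr{\wt A}_{k,l}$, namely $m_{k,l}(\lambda)>2$, is what makes $\psi\to 0$ possible, and it yields the required rate $|x|^{-(m-2)}$.

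To realize the boundary data I use the classical Caffarelli-Li gluing: since $D$ is strictly convex, $\p D\in C^2$ and $\varphi\in C^2(\p D)$, at each $x_0\in\p D$ there is an affine function $\ell_{x_0}\le\varphi$ on $\p D$ with $\ell_{x_0}(x_0)=\varphi(x_0)$, and the envelope $\Phi(x):=\sup_{x_0\in\p D}\ell_{x_0}(x)$ is convex (hence $k$-convex) and agrees with $\varphi$ on $\p D$. For a constant $K=K(n,D,k,l,A,b,\norm{\varphi}_{C^2(\p D)})$ sufficiently large, the function $\underline u:=\max\{v-K,\Phi\}$ is a continuous viscosity subsolution of \eref{eqn.hqe} on $\R^n\setminus\ol D$ with $\underline u=\varphi$ on $\p D$ and $\underline u(x)=Q(x)+O(|x|^{-(m-2)})$ at infinity; enlarging $c$ beyond some threshold $\tilde c$ then ensures $\underline u\le Q$ everywhere. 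The Perron function
\[u(x):=\sup\bigl\{w(x):\underline u\le w\le Q~\text{in}~\R^n\setminus\ol D,~w~\text{a viscosity subsolution of \eref{eqn.hqe}}\bigr\}\]
is a viscosity solution by the standard Ishii argument \cite{CIL92}, the sandwich $\underline u\le u\le Q$ yields both the boundary condition and the prescribed asymptotic behavior, and uniqueness follows from the comparison principle for $\sigma_k/\sigma_l$ on $k$-convex functions (to be recorded in \ssref{subsec.prelem}) applied on annular regions $B_R\setminus\ol D$, noting that the difference of two admissible solutions vanishes at infinity by the prescribed asymptotics.

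The main obstacle is the ODE analysis underlying the subsolution construction: pinning down a quantity $m_{k,l}(\lambda)$ that simultaneously (i) equals $n$ in the Monge-Amp\`{e}re case and therefore reproduces the rate $|x|^{-(n-2)}$ of \cite{CL03}, (ii) equals $n$ at the symmetric matrix $A=c_\ast I$ and so recovers the rate of \cite{Dai11,LD12,LB14}, (iii) exceeds $2$ whenever $k-l\ge 2$, and (iv) genuinely governs the decay of $v-Q$ via the first-order ODE extracted from $\sigma_k(\lambda(D^2v))\ge\sigma_l(\lambda(D^2v))$. Making these four requirements compatible is the new algebraic-analytic ingredient of the paper and is what sustains \pref{prop.wtakl} and the full strength of the theorem.
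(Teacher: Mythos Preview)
Your overall architecture matches the paper's: diagonalize $A$, build a generalized radially symmetric subsolution from an ODE whose decay is governed by $m_{k,l}(\lambda(A))$, glue with boundary barriers, take $\ol u=\tfrac12 x^TAx+b^Tx+c$ as supersolution, and run Perron's method with comparison for uniqueness. But the boundary-barrier step contains a genuine gap.

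You take $\Phi(x)=\sup_{x_0\in\p D}\ell_{x_0}(x)$ with $\ell_{x_0}$ \emph{affine}. An affine function has $D^2\ell_{x_0}=0$, hence $\sigma_k(\lambda(D^2\ell_{x_0}))=0$; for $l=0$ the quotient is $0<1$, and for $l\ge1$ the eigenvalue vector sits on $\p\Gamma_k$ where the quotient operator degenerates. So neither the $\ell_{x_0}$ nor their envelope $\Phi$ is a viscosity subsolution of \eref{eqn.hqe}, and $\ul u=\max\{v-K,\Phi\}$ fails to be a subsolution in the collar where $\Phi$ dominates. ``Convex, hence $k$-convex'' gives only $\lambda(D^2\Phi)\in\ol{\Gamma_k}$, not the inequality $\sigma_k\ge\sigma_l$. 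The paper's remedy (\lref{lem.Qxi} and \rref{rmk.Qxi}) is to use \emph{quadratic} barriers $Q_\xi$ with $D^2Q_\xi=A$, so that $\sigma_k(\lambda(A))/\sigma_l(\lambda(A))=1$ exactly; strict convexity of $D$ and $\varphi\in C^2(\p D)$ allow the center $\bar x(\xi)$ to be chosen so that $Q_\xi\le\varphi$ on $\ol D$ with equality at $\xi$.

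A secondary issue: writing $\ul u=\max\{v-K,\Phi\}$ and then ``enlarging $c$'' conflates two free constants. Subtracting $K$ shifts the asymptotic constant of $v$, so $\ul u$ no longer has the same constant $c$ as $Q$ at infinity. The paper avoids this by leaving $c$ fixed and instead tuning the initial value $\beta$ in the ODE solution $\psi(\cdot,\beta)$ (\lref{lem.psi}): the map $\beta\mapsto\mu(\beta)$ is a strictly increasing bijection onto $[\eta-\tfrac12\bar r^2,\infty)$, so for each $c\ge\tilde c$ there is a unique $\beta(c)$ making the radial subsolution $\Phi_{\beta(c)}$ asymptotic to $\tfrac12 x^TAx+c$ \emph{and} dominating the quadratic boundary barriers on a fixed ellipsoid. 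That monotonicity in $\beta$, not a subtracted constant, is where the threshold $\tilde c$ genuinely comes from.
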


\begin{remark}
\begin{enumerate}[(1)]
\item
One can easily see that
\tref{thm.hqe} still holds
with $A\in\mathscr{\wt A}_{k,l}$
replaced by $A\in\mathscr{\wt A}^\ast_{k,l}$
and $\lambda(A)$ replaced by $\lambda(A^\ast)$,
where
\[\mathscr{\wt A}^\ast_{k,l}
:=\left\{A\in\R^{n\times n}\big{|}
\lambda(A^\ast)\in\Gamma^+,
\sigma_k(\lambda(A^\ast))
=\sigma_l(\lambda(A^\ast)),
m_{k,l}(\lambda(A^\ast))>2\right\}\]
and $A^\ast:=(A+A^T)/2$.
This is to say that the above theorem can
be adapted to a slightly more general form
by modifying the meaning of $\mathscr{\wt A}_{k,l}$.

\item
For the special cases that $l=0$
(i.e., the Hessian equation $\sigma_k(\lambda(D^2u))=1$)
and that $l=0$ and $k=n$
(i.e., the Monge-Amp\`{e}re equation $\det(D^2u)=1$),
in view of \pref{prop.wtakl}-\textsl{(2)},
our \tref{thm.hqe} recovers the corresponding results
\cite[Theorem 1.1]{BLL14} and \cite[Theorem 1.5]{CL03},
respectively.

\item
For $A=c_\ast I$ with $c_\ast$ defined in \eref{eqn.cst},
by \pref{prop.wtakl}-\textsl{(1),(3)},
our results improve those in \cite{Dai11} and \cite{LD12}.
Indeed, by \pref{prop.wtakl}-\textsl{(3)},
the main results in \cite{Dai11}, \cite{LD12}
and those parts concerning
the Hessian quotient equations in \cite{LB14}
can all be recovered by \tref{thm.hqe} as special cases.
Furthermore, our results also apply to
the special Lagrangian equation
$\det(D^2u)=\Delta u$ in three dimension
(respectively,
$\sigma_1(\lambda(D^2u))=\sigma_3(\lambda(D^2u))$
in four dimension),
not only for $A=\sqrt{3}I$ (respectively, $A=I$), but also for
any $A\in\mathscr{A}_{3,1}$.
\qed
\end{enumerate}
\end{remark}

\medskip

The paper is organized as follows.
In \sref{sec.prel},
after giving some basic notations in \ssref{subsec.nott},
we introduce the definitions of
$\Xi_k,\ul\xi_k,\ol\xi_k$ and $m_{k,l}$,
and investigate their properties in \ssref{subsec.xi}.
Then we collect in \ssref{subsec.prelem}
some preliminary lemmas
which will be used in this paper.
\sref{sec.pmaint} is devoted to
the proof of the main theorem (\tref{thm.hqe}).
To do this, we start in \ssref{subsec.csubsol}
to construct some appropriate subsolutions of
the Hessian quotient equation \eref{eqn.hqe},
by taking advantages of the properties of
$\Xi_k,\ul\xi_k,\ol\xi_k$ and $m_{k,l}$
explored in \ssref{subsec.xi}.
Then in \ssref{subsec.pmaint},
after reducing \tref{thm.hqe} to
\lref{lem.hqe} by simplification and normalization,
we prove \lref{lem.hqe}
by applying the Perron's method to
the subsolutions we constructed in \ssref{subsec.csubsol}.

\section{Preliminary}\label{sec.prel}

\subsection{Notation}\label{subsec.nott}
\quad

In this paper,
$S(n)$ denotes the linear space of symmetric $n\times n$ real matrices,
and $I$ denotes the identity matrix.

For any $M\in S(n)$,
if $m_1,m_2,...,m_n$ are the eigenvalues of $M$
(usually, the assumption $m_1\leq m_2\leq...\leq m_n$
is added for convenience),
we will denote this fact briefly by $\lambda(M)=(m_1,m_2,...,m_n)$
and call $\lambda(M)$ the eigenvalue vector of $M$.

For $A\in S(n)$ and $\rho>0$, we denote by
\[
E_\rho:=\left\{x\in\R^n\big{|}x^TAx<\rho^2\right\}
=\left\{x\in\R^n\big{|}r_A(x)<\rho\right\}
\]
the ellipsoid of size $\rho$ with respect to $A$,
where we set $r_A(x):=\sqrt{x^TAx}$.

\medskip

For any $p\in\R^n$, we write
\[\sigma_k(p):=\sum_{1\leq s_1<s_2<...<s_k\leq n}
p_{s_1}p_{s_2}...p_{s_k}\quad(\forall 1\leq k\leq n)\]
as the $k$-th elementary symmetric function of $p$.
Meanwhile, we will adopt the conventions that
$\sigma_{-1}(p)\equiv0$, $\sigma_0(p)\equiv1$
and $\sigma_k(p)\equiv0$, $\forall k\geq n+1$;
and we will also define
\[\sigma_{k;i}(p):=\left(\sigma_{k}(\lambda)
\big{|}_{\lambda_i=0}\right)\Big{|}_{\lambda=p}
=\sigma_{k}\left(p_1,p_2,...,\widehat{p_i},...,p_n\right)\]
for any $-1\leq k\leq n$ and any $1\leq i\leq n$,
and similarly
\[\sigma_{k;i,j}(p):=\left(\sigma_{k}(\lambda)
\big{|}_{\lambda_i=\lambda_j=0}\right)\Big{|}_{\lambda=p}
=\sigma_{k}\left(p_1,p_2,...,\widehat{p_i},
...,\widehat{p_j},...,p_n\right)\]
for any $-1\leq k\leq n$
and any $1\leq i,j\leq n$, $i\neq j$, for convenience.

\subsection{Definitions and properties of
$\Xi_k,\ul\xi_k,\ol\xi_k$ and $m_{k,l}$}\label{subsec.xi}
\quad

To establish the existence of the solution of \eref{eqn.hqe},
by the Perron's method, the key point is
to find some appropriate subsolutions of the equation.
Since the Hessian quotient equation \eref{eqn.hqe}
is a highly fully nonlinear equation which including polynomials
of the eigenvalues of the the Hessian matrix $D^2u$,
$\sigma_k(\lambda)$ and $\sigma_l(\lambda)$,
of different order of homogeneities,
to solve it we need to strike a balance between them.
It will turn out to be clear that
the quantities $\Xi_k,\ul\xi_k,\ol\xi_k$ and $m_{k,l}$,
which we shall introduce below,
are very natural and perfectly fit for this purpose.
\begin{definition}
For any $0\leq k\leq n$ and any $a\in\R^n\setminus\{0\}$, let
\[\Xi_k:=\Xi_k(a,x)
:=\frac{\sum_{i=1}^n\sigma_{k-1;i}(a)a_i^2x_i^2}
{\sigma_k(a)\sum_{i=1}^{n}a_ix_i^2},
~\forall x\in\R^n\setminus\{0\},\]
and define
\[\ol{\xi}_k:=\ol{\xi}_k(a)
:=\sup_{x\in\R^n\setminus\{0\}}\Xi_k(a,x)\]
and
\[\ul{\xi}_k:=\ul{\xi}_k(a)
:=\inf_{x\in\R^n\setminus\{0\}}\Xi_k(a,x).\]
\end{definition}
\begin{definition}
For any $0\leq l<k\leq n$ and any $a\in\R^n\setminus\{0\}$, let
\begin{equation}\label{eqn.mkla}
\dps m_{k,l}:=m_{k,l}(a):=\frac{k-l}{\ol{\xi}_k(a)-\ul{\xi}_l(a)}.
\end{equation}
\end{definition}
We remark, for the reader's convenience, that
$\Xi_k$ originates from
the computation of $\sigma_k(D^2\Phi(x))$
where $\Phi(x)$ is a generalized radially symmetric function
(see \lref{lem.skm} and the proof of \lref{lem.Phi-subsol}),
that $\ul\xi_k$ and $\ol\xi_k$ result from
the comparison between $\sigma_k(\lambda)$ and $\sigma_l(\lambda)$
in the attempt to derive
an ordinary differential equation
from the original equation
(see the last part of the proof of \lref{lem.Phi-subsol}),
and that $m_{k,l}$ arises in the process of solving
this ordinary differential equation
(see \eref{eqn.mdlnr} in the proof of \lref{lem.psi}).
By $\Xi_k,\ul\xi_k$ and $\ol\xi_k$,
we get a good balance between
$\sigma_k(\lambda)$ and $\sigma_l(\lambda)$,
which can be measured by $m_{k,l}$.
Furthermore, we will find that
$m_{k,l}$ has also some special meaning related to
the decay and asymptotic behavior of the solution
(see \lref{lem.psi}\textsl{-(iii)},
\cref{cor.mub} and \tref{thm.hqe}).

\bigskip

It is easy to see that
\[\ol{\xi}_k(\varrho a)=\ol{\xi}_k(a),
~\ul{\xi}_k(\varrho a)=\ul{\xi}_k(a),
~\forall\varrho\neq 0,
~\forall a\in\R^n\setminus\{0\},
~\forall 0\leq k\leq n,\]
and
\[\ul\xi_k(C(1,1,...,1))=\frac{k}{n}
=\ol\xi_k(C(1,1,...,1)),
~\forall C>0,~\forall 0\leq k\leq n.\]
Furthermore, we have the following lemma.
\begin{lemma}\label{lem.xik}
Suppose $a=(a_1,a_2,...,a_n)$
with $0<a_1\leq a_2\leq...\leq a_n$.
Then
\begin{equation}\label{eqn.uxknox}
0<\frac{a_1\sigma_{k-1;1}(a)}{\sigma_k(a)}=\ul{\xi}_k(a)
\leq\frac{k}{n}\leq\ol{\xi}_k(a)
=\frac{a_n\sigma_{k-1;n}(a)}{\sigma_k(a)}\leq 1,
~\forall 1\leq k\leq n;
\end{equation}
\begin{equation}\label{eqn.olxiin}
0=\ol{\xi}_0(a)<\frac{1}{n}\leq\frac{a_n}{\sigma_1(a)}
=\ol{\xi}_1(a)\leq\ol{\xi}_2(a)
\leq...\leq\ol{\xi}_{n-1}(a)<\ol{\xi}_n(a)=1;
\end{equation}
and
\begin{equation}\label{eqn.ulxiin}
0=\ul{\xi}_0(a)<\frac{a_1}{\sigma_1(a)}
=\ul{\xi}_1(a)\leq\ul{\xi}_2(a)
\leq...\leq\ul{\xi}_{n-1}(a)<\ul{\xi}_n(a)=1.
\end{equation}
Moreover,
\begin{equation}\label{eqn.xkkn}
\ul\xi_k(a)=\frac{k}{n}=\ol\xi_k(a)
\end{equation}
for some $1\leq k\leq n-1$,
if and only if $a=C(1,1,...,1)$ for some $C>0$.
\end{lemma}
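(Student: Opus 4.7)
My plan is to linearize the expression for $\Xi_k$ via a change of variables, reducing the computation of $\ol\xi_k$ and $\ul\xi_k$ to the maximization and minimization of an affine functional on the standard simplex. This reformulation yields the explicit formulas in \eref{eqn.uxknox}, from which every bound and monotonicity property reduces to a routine symmetric-function identity or a classical Newton-type inequality.

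First, I would set $t_i:=a_ix_i^2\geq 0$ and use the elementary recursion $a_i\sigma_{k-1;i}(a)=\sigma_k(a)-\sigma_{k;i}(a)$ to rewrite
\[
\Xi_k(a,x)=1-\frac{\sum_{i=1}^n\sigma_{k;i}(a)\,w_i}{\sigma_k(a)},\qquad w_i:=\frac{t_i}{\sum_j t_j},
\]
which is affine in $w=(w_1,\ldots,w_n)$; as $x$ varies over $\R^n\setminus\{0\}$ the vector $w$ sweeps out the entire simplex $\Delta^{n-1}$, so the extrema of $\Xi_k$ are attained at vertices. The ordering $\sigma_{k;1}(a)\geq\cdots\geq\sigma_{k;n}(a)$, which follows from the identity $\sigma_{k;i}(a)-\sigma_{k;j}(a)=(a_j-a_i)\sigma_{k-1;i,j}(a)$ together with $\sigma_{k-1;i,j}(a)>0$, then pins down the maximizing vertex at $i=n$ and the minimizing vertex at $i=1$, giving the closed-form expressions in \eref{eqn.uxknox}.

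To obtain the intermediate bound $\ul\xi_k\leq k/n\leq\ol\xi_k$, I would evaluate $\Xi_k$ at $w_i\equiv 1/n$ (take $x_i=1/\sqrt{a_i}$) and invoke the combinatorial identity $\sum_i\sigma_{k;i}(a)=(n-k)\sigma_k(a)$. The remaining bounds and the boundary values at $k=0$ and $k=n$ in \eref{eqn.olxiin} and \eref{eqn.ulxiin} are immediate from the conventions $\sigma_{-1;i}\equiv 0$, $\sigma_{n-1;i}(a)=\sigma_n(a)/a_i$, together with the estimate $a_1\leq \sigma_1(a)/n\leq a_n$. Next, the monotonicity $\ol\xi_k\leq\ol\xi_{k+1}$ reduces, after writing $\sigma_j(a)=u_j+a_n u_{j-1}$ with $u_j:=\sigma_{j;n}(a)$, to the classical Newton inequality $u_{k-1}u_{k+1}\leq u_k^{\,2}$ for the elementary symmetric polynomials of the $n-1$ positive numbers $a_1,\ldots,a_{n-1}$. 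The analogous claim $\ul\xi_k\leq\ul\xi_{k+1}$ reduces similarly to Newton's inequality for $a_2,\ldots,a_n$.

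For the rigidity clause, if $\ul\xi_k(a)=\ol\xi_k(a)=k/n$ for some $1\leq k\leq n-1$, then by the first step $\Xi_k$ must be constant on $\Delta^{n-1}$, which forces $\sigma_{k;1}(a)=\cdots=\sigma_{k;n}(a)$; the difference identity noted above, combined with $\sigma_{k-1;i,j}(a)>0$, then forces $a_1=\cdots=a_n$. The converse is a direct check using $\sigma_j(C(1,\ldots,1))=\binom{n}{j}C^j$. I expect the main conceptual obstacle to lie in recognizing these two symmetric-function identities — the linearizing one at the outset and the difference formula for the rigidity step — because once they are in hand the entire lemma collapses into well-known facts about optimization of affine functions on simplices and log-concavity of elementary symmetric polynomials.
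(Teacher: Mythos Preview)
Your proposal is correct and follows essentially the same route as the paper: the same recursion $\sigma_k=\sigma_{k;i}+a_i\sigma_{k-1;i}$, the same difference identity $(a_j-a_i)\sigma_{k-1;i,j}$ for the ordering, the same averaging identity $\sum_i\sigma_{k;i}=(n-k)\sigma_k$ for the $k/n$ bound, and Newton's inequality for the monotonicity in $k$. Your framing as affine optimization over the simplex is a clean repackaging of the paper's direct sup/inf computation at the coordinate axes, but the underlying argument is identical.
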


\begin{proof}
($1^\circ$)
By the definitions of $\sigma_k(a)$ and $\sigma_{k;i}(a)$,
we see that
\begin{equation}\label{eqn.sk}
\sigma_k(a)=\sigma_{k;i}(a)+a_i\sigma_{k-1;i}(a),
~\forall 1\leq i\leq n;
\end{equation}
and
\[\sum_{i=1}^{n}\sigma_{k;i}(a)
=\frac{nC_{n-1}^k}{C_n^k}\sigma_k(a)
=(n-k)\sigma_k(a).\]
Hence we obtain
\begin{equation}\label{eqn.ksk}
\sum_{i=1}^{n}a_i\sigma_{k-1;i}(a)=k\sigma_k(a).
\end{equation}

Now we show that
\begin{equation}\label{eqn.aiski}
a_1\sigma_{k-1;1}(a)
\leq a_2\sigma_{k-1;2}(a)\leq...
\leq a_n\sigma_{k-1;n}(a).
\end{equation}
In fact, for any $i\neq j$,
similar to \eref{eqn.sk}, we have
\[a_i\sigma_{k-1;i}(a)
=a_i\left(\sigma_{k-1;i,j}(a)+a_j\sigma_{k-2;i,j}(a)\right)\]
and
\[a_j\sigma_{k-1;j}(a)
=a_j\left(\sigma_{k-1;i,j}(a)+a_i\sigma_{k-2;i,j}(a)\right),\]
thus
\[a_i\sigma_{k-1;i}(a)-a_j\sigma_{k-1;j}(a)
=(a_i-a_j)\sigma_{k-1;i,j}(a).\]
Hence if $a_i\lessgtr a_j$, then
\begin{equation}\label{eqn.aslgeq}
a_i\sigma_{k-1;i}(a)\lessgtr a_j\sigma_{k-1;j}(a).
\end{equation}

By the definition of $\ol\xi_k$, we have
\begin{eqnarray*}
\dps \ol{\xi}_k(a)
&=&\sup_{x\neq 0}\frac{\sum_{i=1}^n\sigma_{k-1;i}(a)a_i^2x_i^2}
{\sigma_k(a)\sum_{i=1}^{n}a_ix_i^2}\\
\dps &\geq& \sup_{\substack{x_1=...=x_{n-1}=0,\\x_n\neq 0}}
\frac{\sum_{i=1}^n\sigma_{k-1;i}(a)a_i^2x_i^2}
{\sigma_k(a)\sum_{i=1}^{n}a_ix_i^2}\\
\dps &=&\sup_{x_n\neq 0}\frac{\sigma_{k-1;n}(a)a_n^2x_n^2}
{\sigma_k(a)a_n x_n^2}\\
\dps &=&\frac{a_n\sigma_{k-1;n}(a)}{\sigma_k(a)}
\end{eqnarray*}
and
\begin{eqnarray*}
\dps \ol{\xi}_k(a)
&=&\sup_{x\neq 0}\frac{\sum_{i=1}^n\sigma_{k-1;i}(a)a_i^2x_i^2}
{\sigma_k(a)\sum_{i=1}^{n}a_ix_i^2}\\
\dps &\leq& \sup_{x\neq 0}\frac{a_n\sigma_{k-1;n}(a)\sum_{i=1}^na_ix_i^2}
{\sigma_k(a)\sum_{i=1}^{n}a_ix_i^2}\qquad\text{by \eref{eqn.aiski}}\\
\dps &=&\frac{a_n\sigma_{k-1;n}(a)}{\sigma_k(a)}.
\end{eqnarray*}
Hence we obtain
\begin{equation}\label{eqn.olxik}
\ol\xi_k(a)=\frac{a_n\sigma_{k-1;n}(a)}{\sigma_k(a)}.
\end{equation}
Similarly
\begin{equation}\label{eqn.ulxik}
\ul\xi_k(a)=\frac{a_1\sigma_{k-1;1}(a)}{\sigma_k(a)}.
\end{equation}

From \eref{eqn.ksk}, we have
\[\sum_{i=1}^{n}\frac{a_i\sigma_{k-1;i}(a)}{\sigma_k(a)}=k.\]
Combining this with \eref{eqn.aiski},
\eref{eqn.olxik} and \eref{eqn.ulxik},
we deduce that
\[\ul\xi_k(a)\leq\frac{k}{n}\leq\ol\xi_k(a).\]
Thus the proof of \eref{eqn.uxknox} is complete,
and \eref{eqn.xkkn} is also clear in view of \eref{eqn.aslgeq}.

\medskip

($2^\circ$)
Since it follows from \eref{eqn.sk} that
\[a_i\sigma_{k-1;i}(a)<\sigma_k(a),
~\forall 1\leq i\leq n,
~\forall 1\leq k\leq n-1,\]
we obtain
\[\ul\xi_k(a)\leq\ol\xi_k(a)<1,~\forall 0\leq k\leq n-1.\]
On the other hand, we have $\ol{\xi}_n(a)=\ul{\xi}_n(a)=1$
which follows from
\[a_i\sigma_{n-1;i}(a)=\sigma_n(a),~\forall 1\leq i\leq n.\]

Combining \eref{eqn.olxik} and \eref{eqn.sk}, we discover that
\begin{eqnarray*}
\ol\xi_k(a)&=&\frac{a_n\sigma_{k-1;n}(a)}{\sigma_k(a)}
=\frac{a_n\sigma_{k-1;n}(a)}{\sigma_{k;n}(a)+a_n\sigma_{k-1;n}(a)}\\
&\leq&\frac{a_n\sigma_{k;n}(a)}{\sigma_{k+1;n}(a)+a_n\sigma_{k;n}(a)}
=\frac{a_n\sigma_{k;n}(a)}{\sigma_{k+1}(a)}
=\ol\xi_{k+1}(a),
\end{eqnarray*}
where we used the inequality
\[\frac{\sigma_{k-1;n}(a)}{\sigma_{k;n}(a)}
\leq\frac{\sigma_{k;n}(a)}{\sigma_{k+1;n}(a)}\]
which is a variation of the famous Newton inequality(see \cite{HLP34})
\[\sigma_{k-1}(\lambda)\sigma_{k+1}(\lambda)
\leq\left(\sigma_{k}(\lambda)\right)^2,
~\forall\lambda\in\R^n.\]
Thus the proof of \eref{eqn.olxiin},
and similarly of \eref{eqn.ulxiin}, is complete.
\end{proof}

\medskip

Since it follows from \eref{eqn.uxknox} that
\[\frac{k-l}{n}\leq\ol{\xi}_k(a)-\ul{\xi}_l(a)
<\ol{\xi}_k(a)\leq 1,\]
we obtain
\begin{corollary}\label{cor.m}
If $0\leq l<k\leq n$ and $a\in\Gamma^+$, then
\[1\leq k-l<m_{k,l}(a)\ol\xi_k(a)\leq m_{k,l}(a)\leq n.\]
\end{corollary}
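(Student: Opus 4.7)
The plan is to view \cref{cor.m} as a direct rearrangement of \lref{lem.xik} after substituting the definition $m_{k,l}(a)=(k-l)/(\ol\xi_k(a)-\ul\xi_l(a))$. Thus the entire work reduces to establishing the two-sided estimate
\[\tfrac{k-l}{n}\ \leq\ \ol\xi_k(a)-\ul\xi_l(a)\ <\ \ol\xi_k(a)\ \leq\ 1,\]
from which the four-term chain in the corollary will follow by straightforward rearrangement. In particular, this already confirms that the denominator defining $m_{k,l}(a)$ is strictly positive, so that $m_{k,l}(a)$ is well-defined for every $a\in\Gamma^+$.

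For the left bound on $\ol\xi_k(a)-\ul\xi_l(a)$, I would apply \eref{eqn.uxknox} to obtain $\ol\xi_k(a)\geq k/n$ and, when $l\geq 1$, $\ul\xi_l(a)\leq l/n$; the edge case $l=0$ is handled by $\ul\xi_0(a)=0$ from \eref{eqn.ulxiin}. Subtracting yields $\ol\xi_k(a)-\ul\xi_l(a)\geq (k-l)/n$, which on inverting gives the rightmost bound $m_{k,l}(a)\leq n$. For the middle inequality $k-l<m_{k,l}(a)\ol\xi_k(a)$, I would rewrite
\[m_{k,l}(a)\ol\xi_k(a)=\frac{(k-l)\ol\xi_k(a)}{\ol\xi_k(a)-\ul\xi_l(a)},\]
which exceeds $k-l$ precisely when $\ul\xi_l(a)>0$; the latter is exactly the leftmost strict inequality in \eref{eqn.uxknox}, using $a\in\Gamma^+$. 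The remaining bound $m_{k,l}(a)\ol\xi_k(a)\leq m_{k,l}(a)$ is immediate from $\ol\xi_k(a)\leq 1$ in \eref{eqn.olxiin}, and $k-l\geq 1$ is simply because $k,l$ are integers with $k>l$.

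There is no substantive obstacle: the content of the corollary is already encoded in \lref{lem.xik}, and the only care required is tracking which of the four inequalities are strict and which are non-strict. The purpose of isolating \cref{cor.m} as a separate statement is to record the precise bounds on $m_{k,l}(a)$ and on the product $m_{k,l}(a)\ol\xi_k(a)$ in the form in which they will be invoked later, when analyzing the decay rate and prescribed asymptotic behavior of the radially symmetric subsolutions that enter the proof of \tref{thm.hqe}.
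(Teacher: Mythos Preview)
Your proposal is correct and follows exactly the same route as the paper: both reduce the corollary to the chain $\tfrac{k-l}{n}\leq\ol\xi_k(a)-\ul\xi_l(a)<\ol\xi_k(a)\leq 1$ (the paper states this in one line, citing \eref{eqn.uxknox}, and then declares the corollary immediate). Your write-up simply spells out the rearrangement in more detail, including the separate treatment of the edge case $l=0$ via \eref{eqn.ulxiin}; there is nothing substantively different.
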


As an application of \cref{cor.m} and \lref{lem.xik},
we now verify \pref{prop.wtakl}.
\begin{proof}[\textbf{Proof of \pref{prop.wtakl}}]
\textsl{(1)} and \textsl{(2)} are clear.
For \textsl{(3)}, we only need to note that
$c_\ast I\in\mathscr{A}_{k,l}$
and $m_{k,l}(c_\ast(1,1,...,1))=n>2$.
\end{proof}

\medskip

To help the reader to become familiar with these new quantities,
it is worth to give the following examples
which are also the applications of the above lemma.
\begin{example}
Note that, for $a=(a_1,a_2,a_3)\in\R^3$
with $0<a_1\leq a_2\leq a_3$, by \lref{lem.xik}, we have
\[\ol\xi_3(a)\equiv1\equiv\ul\xi_3(a),\]
\[\ol\xi_2(a)=\frac{a_3(a_1+a_2)}{a_1a_2+a_1a_3+a_2a_3},
\quad
\ul\xi_2(a)=\frac{a_1(a_2+a_3)}{a_1a_2+a_1a_3+a_2a_3},\]
\[\ol\xi_1(a)=\frac{a_3}{a_1+a_2+a_3},
\quad
\ul\xi_1(a)=\frac{a_1}{a_1+a_2+a_3},\]
and
\[\ol\xi_0(a)\equiv0\equiv\ul\xi_0(a).\]
Thus we can compute, for $a=(1,2,3)$, that
\[\ol\xi_2=\frac{9}{11},~\ul\xi_2=\frac{5}{11},
~\ol\xi_1=\frac{1}{2},~\ul\xi_1=\frac{1}{6},\]
\[m_{3,2}=\frac{11}{6}<2,
~m_{3,1}=\frac{12}{5}>2,
~m_{3,0}\equiv3>2,\]
\[m_{2,1}=\frac{66}{43}<2,
~m_{2,0}=\frac{22}{9}>2
~\text{and}~m_{1,0}=2,\]
and, for $a=(11,12,13)$, that
\[\ol\xi_2=\frac{299}{431},~\ul\xi_2=\frac{275}{431},
~\ol\xi_1=\frac{13}{36},~\ul\xi_1=\frac{11}{36},\]
\[m_{3,2}=\frac{431}{156}>2,
~m_{3,1}=\frac{72}{25}>2,
~m_{3,0}\equiv3>2,\]
\[m_{2,1}=\frac{15516}{6023}>2,
~m_{2,0}=\frac{862}{299}>2
~\text{and}~m_{1,0}=\frac{36}{13}>2.\]
\end{example}

\begin{remark}
\begin{enumerate}[(1)]
\item
By definition of $m_{k,l}$,
we can easily check that for any $1<k\leq n$,
$m_{k,k-1}(a)>2$
if and only if
$\ul\xi_{k-1}(a)\leq\ol\xi_{k}(a)\leq\ul\xi_{k-1}(a)+1/2$.
This will show us how $m_{k,l}$ plays a role
in the making of a balance
between different order of homogeneities
as we stated in the beginning of this subsection.

\item
\pref{prop.wtakl}-\textsl{(1)} states that
$\mathscr{\wt A}_{k,l}=\mathscr{A}_{k,l}$ provided $k-l\geq2$.
Note that this is the best case we can expect,
since in general
$\mathscr{\wt A}_{k,k-1}\subsetneqq\mathscr{A}_{k,k-1}$,
which is evident
by the fact stated in the first item of this remark
(and also by the above examples).
For example, in $\R^3$ we have
\[m_{3,2}(a)>2
\Leftrightarrow
\ol\xi_{3}(a)\leq\ul\xi_{2}(a)+1/2
\Leftrightarrow
a_1>\frac{a_2a_3}{a_2+a_3},\]
where the last inequality is not always true.\qed
\end{enumerate}
\end{remark}

\subsection{Some preliminary lemmas}\label{subsec.prelem}
\quad

In this subsection, we collect some preliminary lemmas
which will be mainly used in \sref{sec.pmaint}.

\medskip

We first give a lemma to compute $\sigma_k(\lambda(M))$
with $M$ of certain type.
~If $\Phi(x):=\phi(r)$ with $\phi\in C^2$, $r=\sqrt{x^TAx}$,
$A\in S(n)\cap\Gamma^+$ and $a=\lambda(A)$
(we may call $\Phi$ a \emph{generalized radially symmetric function}
with respect to $A$, according to \cite{BLL14}),
one can conclude that
\[\p_{ij}\Phi(x)
=\frac{\phi'(r)}{r}a_i\delta_{ij}+
\frac{\phi''(r)-\frac{\phi'(r)}{r}}{r^2}(a_ix_i)(a_jx_j),
~\forall 1\leq i,j\leq n,\]
provided $A$ is normalized to a diagonal matrix
(see the first part of \ssref{subsec.pmaint}
and the proof of \lref{lem.Phi-subsol} for details).
As far as we know, generally there is no explicit formula
for $\lambda(D^2\Phi(x))$ of this type,
but luckily we have a method
to calculate $\sigma_k\left(\lambda(D^2\Phi(x))\right)$
for each $1\leq k\leq n$,
which can be presented as follows.
\begin{lemma}\label{lem.skm}
If $M=\left(p_i\delta_{ij}+s q_iq_j\right)_{n\times n}$
with $p,q\in\R^n$ and $s\in\R$,
then
\[\sigma_k\left(\lambda(M)\right)
=\sigma_k(p)+s\sum_{i=1}^n\sigma_{k-1;i}(p)q_i^2,
~\forall 1\leq k\leq n.\]
\end{lemma}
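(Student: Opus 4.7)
The plan is to prove the identity by viewing $\sigma_k(\lambda(M))$ as the sum of all principal $k\times k$ minors of $M$, evaluating each such minor via a matrix-determinant-lemma type expansion, and then reassembling the sum by swapping the order of summation. Since $M=\mathrm{diag}(p)+sqq^T$ is a rank-one perturbation of a diagonal matrix, every principal submatrix inherits the same structure, so this reduction is natural and preserves the shape of the problem.

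First I would fix an index set $S\subseteq\set{1,2,\dots,n}$ with $|S|=k$ and denote the principal submatrix $M_S:=(M_{ij})_{i,j\in S}$. Writing $M_S=D_S+s\,q_Sq_S^T$, where $D_S$ is diagonal with entries $p_i$ for $i\in S$ and $q_S:=(q_i)_{i\in S}$, I would compute $\det(M_S)$ by expanding via multilinearity of the determinant in the columns: each column splits into a diagonal contribution plus the rank-one contribution $s q_i q_S$, and every term in the resulting expansion that involves two or more rank-one columns vanishes, since any matrix with two proportional columns is singular. What survives is
\[
\det(M_S)=\prod_{i\in S}p_i+s\sum_{i\in S}q_i^2\prod_{j\in S,\,j\neq i}p_j.
\]
Equivalently, this is the classical identity $\det(A+uv^T)=\det(A)+v^T\mathrm{adj}(A)u$ applied to the diagonal matrix $D_S$, whose adjugate has diagonal entries $\prod_{j\in S,\,j\neq i}p_j$; note that this holds as a polynomial identity in the entries, so no invertibility assumption is needed.

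Then I would sum over all $S$ with $|S|=k$ and swap the order of summation in the second term:
\[
\sigma_k(\lambda(M))=\sum_{|S|=k}\det(M_S)=\sigma_k(p)+s\sum_{i=1}^n q_i^2\sum_{\substack{|S|=k\\ i\in S}}\prod_{j\in S\setminus\set{i}}p_j.
\]
For each fixed $i$, setting $T:=S\setminus\set{i}$ puts the inner sum into bijection with the collection of $(k-1)$-subsets of $\set{1,2,\dots,n}$ not containing $i$; that sum is precisely $\sigma_{k-1;i}(p)$ by the very definition introduced in \ssref{subsec.nott}. This yields the claimed formula.

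I do not anticipate a serious obstacle here: the principal-minor reformulation reduces the problem to a rank-one determinant identity on each submatrix, and the final step is a transparent unraveling of the notation $\sigma_{k-1;i}(p)$. The only point requiring mild care is the multilinear expansion of $\det(M_S)$, which is why invoking the matrix determinant lemma as a polynomial identity is the cleanest way to package that computation and sidestep any worry about singular $D_S$.
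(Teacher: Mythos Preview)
Your argument is correct: the identity $\sigma_k(\lambda(M))=\sum_{|S|=k}\det(M_S)$ reduces the problem to computing each principal minor of the rank-one perturbation $D_S+s\,q_Sq_S^T$, the matrix determinant lemma (as a polynomial identity, so no invertibility is needed) gives $\det(M_S)=\prod_{i\in S}p_i+s\sum_{i\in S}q_i^2\prod_{j\in S\setminus\{i\}}p_j$, and swapping the order of summation recovers $\sigma_{k-1;i}(p)$ exactly as defined in the paper.

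There is nothing to compare against in the paper itself: its proof of this lemma consists solely of the reference ``See \cite{BLL14}.'' Your write-up is therefore a genuine, self-contained proof where the paper offers none, and it is the standard and cleanest route to this identity.
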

\begin{proof}
See \cite{BLL14}.
\end{proof}

\medskip

To process information on the boundary
we need the following lemma.
\begin{lemma}\label{lem.Qxi}
Let $D$ be a bounded strictly convex domain
of $\R^n$, $n\geq 2$, $\p D\in C^2$,
$\varphi\in C^0(D)\cap C^2(\p{D})$
and let $A\in S(n)$, $\det{A}\neq0$.
Then there exists a constant $K>0$ depending only on $n$,
$\mbox{\emph{diam}}\,D$, the convexity of $D$,
$\norm{\varphi}_{C^2(\ol{D})}$,
the $C^2$ norm of $\p D$ and the upper bound of $A$,
such that for any $\xi\in\p D$,
there exists $\bar{x}(\xi)\in\R^n$ satisfying
\[\left|\bar{x}(\xi)\right|\leq K
\quad \mbox{and} \quad Q_\xi(x)<\varphi(x),
~\forall x\in \ol{D}\setminus\{\xi\},\]
where
\[Q_\xi(x):=\frac{1}{2}\left(x-\bar{x}(\xi)\right)^TA\left(x-\bar{x}(\xi)\right)
-\frac{1}{2}\left(\xi-\bar{x}(\xi)\right)^TA\left(\xi-\bar{x}(\xi)\right)
+\varphi(\xi),~\forall x\in\R^n.\]
\end{lemma}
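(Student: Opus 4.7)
The plan is to choose the parameter $\bar x(\xi)$ so that the gradient of $Q_\xi$ at $\xi$ has a large component in the outer normal direction of $\p D$, and then exploit the strict convexity of $\p D$ to propagate the inequality $Q_\xi<\varphi$ from $\xi$ to the whole of $\ol D\setminus\{\xi\}$.

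First I would extend $\varphi$ from $\p D$ to a $C^2$ function $\tilde\varphi$ on $\ol D$ with $\norm{\tilde\varphi}_{C^2(\ol D)}\leq C\norm{\varphi}_{C^2(\p D)}$, using the signed distance to $\p D$ (which is $C^2$ in a tubular neighborhood of $\p D$ since $\p D\in C^2$) composed with the nearest-point projection and smoothly cut off. Writing $\nu(\xi)$ for the outer unit normal at $\xi\in\p D$, I would then take
$$\bar x(\xi):=\xi-A^{-1}\bigl(\nabla\tilde\varphi(\xi)+T\,\nu(\xi)\bigr),$$
for a large parameter $T>0$ to be fixed below. A direct computation using the definition of $Q_\xi$ yields $\nabla Q_\xi(\xi)=A(\xi-\bar x(\xi))=\nabla\tilde\varphi(\xi)+T\nu(\xi)$, and $\abs{\bar x(\xi)}\leq\mbox{diam}\,D+\abs{A^{-1}}(\norm{\tilde\varphi}_{C^1(\ol D)}+T)=:K$, uniformly in $\xi$.

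To verify $Q_\xi<\tilde\varphi$ on $\ol D\setminus\{\xi\}$, set $F(x):=\tilde\varphi(x)-Q_\xi(x)$ and Taylor-expand at $\xi$ using $Q_\xi(\xi)=\tilde\varphi(\xi)$ and the formula for $\nabla Q_\xi(\xi)$ to obtain
$$F(x)=-T\,\nu(\xi)\cdot(x-\xi)+R(x),\qquad\abs{R(x)}\leq C_0\abs{x-\xi}^2,$$
with $C_0$ depending only on $\abs{A}$ and $\norm{\tilde\varphi}_{C^2(\ol D)}$. I would then split into two regimes. Locally, writing $\p D$ near $\xi$ as a graph over the tangent hyperplane with Hessian bounded below (using strict convexity and the $C^2$ norm of $\p D$), one finds uniform $r_0,c_0>0$ such that $-\nu(\xi)\cdot(x-\xi)\geq c_0\abs{x-\xi}^2$ for every $\xi\in\p D$ and every $x\in\ol D$ with $\abs{x-\xi}\leq r_0$; hence $F(x)\geq(Tc_0-C_0)\abs{x-\xi}^2>0$ once $T>C_0/c_0$. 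Globally, compactness of the closed set $\{(\xi,x)\in\p D\times\ol D:\abs{x-\xi}\geq r_0\}$, together with the fact that $-\nu(\xi)\cdot(x-\xi)>0$ there (pointwise, by strict convexity), yields a uniform $\delta>0$ with $-\nu(\xi)\cdot(x-\xi)\geq\delta$; hence $F(x)\geq T\delta-C_0(\mbox{diam}\,D)^2$, which is positive once $T$ also exceeds $C_0(\mbox{diam}\,D)^2/\delta$.

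Fixing $T$ above both thresholds yields the required $\bar x(\xi)$ with $\abs{\bar x(\xi)}\leq K$, and the bound $K$ depends on exactly the data listed. The main obstacle I anticipate is ensuring that the constants $r_0,c_0,\delta$ can be taken uniformly in $\xi\in\p D$; this relies on the uniform strict convexity of $\p D$ (encoded in the convexity modulus of $D$ and the $C^2$ norm of $\p D$) together with compactness of $\p D$, and must be carried out consistently across both the near-$\xi$ and far-from-$\xi$ regimes so that a single choice of $T$ works simultaneously for every boundary point.
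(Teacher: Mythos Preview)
The paper does not give its own proof of this lemma; it simply refers to \cite{CL03} and \cite{BLL14}. Your construction is precisely the standard one found in those references: pick $\bar x(\xi)$ so that $\nabla Q_\xi(\xi)=\nabla\varphi(\xi)+T\nu(\xi)$, then use uniform strict convexity of $\partial D$ near $\xi$ and compactness away from $\xi$ to force $Q_\xi<\varphi$ once $T$ is large. The argument is correct, and the uniformity of $r_0,c_0,\delta$ that you flag as the main obstacle is exactly what the hypotheses (strict convexity, $\partial D\in C^2$, compactness of $\partial D$) supply.

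Two minor remarks. First, your bound for $\abs{\bar x(\xi)}$ involves $\abs{A^{-1}}$, which is not among the listed dependencies (``the upper bound of $A$''); this is an imprecision in the lemma statement rather than a flaw in your proof, and in the paper's applications $A$ is positive definite with eigenvalues controlled on both sides. Second, since the stated dependence is on $\norm{\varphi}_{C^2(\ol D)}$, you may take $\varphi\in C^2(\ol D)$ from the outset and skip the extension step; otherwise your extension $\tilde\varphi$ need not coincide with the given $\varphi$ on the interior of $D$, and the conclusion $Q_\xi<\varphi$ on $\ol D\setminus\{\xi\}$ would not follow from $Q_\xi<\tilde\varphi$.
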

\begin{proof}
See \cite{CL03} or \cite{BLL14}.
\end{proof}

\begin{remark}\label{rmk.Qxi}
It is easy to check that
$Q_\xi$ satisfy the following properties.
\begin{enumerate}[\quad(1)]
\item $Q_\xi\leq\varphi$ on $\ol{D}$
and $Q_\xi(\xi)=\varphi(\xi)$.

\item If $A\in\mathscr{A}_{k,l}$, then
\[\frac{\sigma_k(\lambda(D^2Q_\xi))}{\sigma_l(\lambda(D^2Q_\xi))}=1
\quad\mbox{in}~\R^n.\]

\item There exists $\bar{c}=\bar{c}(D,A,K)>0$ such that
\[Q_\xi(x)\leq\frac{1}{2}x^TAx+\bar{c},
\quad \forall x\in\p D,~\forall\xi\in\p D.\]
\end{enumerate}
\end{remark}

\medskip

Now we introduce the following well known lemmas
about the comparison principle and Perron's method
which will be applied to the Hessian quotient equations
but stated in a slightly more general setting.
These lemmas are adaptions of those appeared
in \cite{CNS85} \cite{Jen88} \cite{Ish89}
\cite{Urb90} and \cite{CIL92}.
For specific proof of them one may also consult
\cite{BLL14} and \cite{LB14}.
\begin{lemma}[Comparison principle]\label{lem.cp}
Assume $\Gamma^+\subset\Gamma\subset\R^n$ is
an open convex symmetric cone with its vertex at the origin,
and suppose $f\in C^1(\Gamma)$ and
$f_{\lambda_i}(\lambda)>0$, $\forall \lambda\in\Gamma$,
$\forall i=1,2,...,n$.
Let $\Omega\subset\R^n$ be a domain
and let $\ul{u},\ol{u}\in C^0(\ol\Omega)$ satisfying
\[f\left(\lambda\left(D^2\ul{u}\right)\right)\geq 1
\geq f\left(\lambda\left(D^2\ol{u}\right)\right)\]
in $\Omega$ in the viscosity sense.
Suppose $\ul{u}\leq \ol{u}$ on $\p\Omega$
(and additionally
\[\lim_{|x|\ra+\infty}\left(\ul{u}-\ol{u}\right)(x)=0\]
provided $\Omega$ is unbounded).
Then $\ul{u}\leq \ol{u}$ in $\Omega$.
\end{lemma}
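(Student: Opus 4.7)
The plan is the standard two-step viscosity comparison argument: first reduce to a bounded domain, then combine a small ``strictifying'' perturbation of $\ul u$ with a doubling-of-variables argument (Jensen/Ishii lemma) to derive a contradiction at an interior maximum of $\ul u - \ol u$.

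If $\Omega$ is unbounded, I would fix $\delta > 0$, use the decay hypothesis to choose $R$ so large that $\ul u \leq \ol u + \delta$ on $\ol\Omega \setminus B_R$, and apply the bounded case to the pair $(\ul u, \ol u + \delta)$ on $\Omega \cap B_R$; sending $R \ra \infty$ and then $\delta \ra 0$ reduces everything to the case of bounded $\Omega$. Fixing $R$ with $\ol\Omega \subset B_R$, I would then set
$$\ul u_\epsilon(x) := \ul u(x) + \epsilon(|x|^2 - R^2),$$
so that $\ul u_\epsilon < \ul u \leq \ol u$ on $\p\Omega$. If $\phi \in C^2$ touches $\ul u_\epsilon$ from above at $x^\ast \in \Omega$, then $\phi - \epsilon(|x|^2 - R^2)$ touches $\ul u$ from above there, giving $\lambda(D^2\phi(x^\ast) - 2\epsilon I) \in \Gamma$ and $f(\lambda(D^2\phi(x^\ast) - 2\epsilon I)) \geq 1$. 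Because $\Gamma$ is an open convex cone containing $\Gamma^+$, adding the vector $2\epsilon(1,\ldots,1) \in \Gamma^+$ keeps us in $\Gamma$, and the strict monotonicity hypothesis $f_{\lambda_i} > 0$ upgrades the inequality to $f(\lambda(D^2\phi(x^\ast))) \geq 1 + \eta_\epsilon$ for some $\eta_\epsilon > 0$ uniform on compact subsets. Hence $\ul u_\epsilon$ is a \emph{strict} subsolution.

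I would then argue by contradiction: suppose for some fixed small $\epsilon$ the maximum $M := \max_{\ol\Omega}(\ul u_\epsilon - \ol u)$ is positive; since $\ul u_\epsilon < \ol u$ on $\p\Omega$, it is attained at an interior point $x_0$. Applying Ishii's lemma to
$$\Phi_\alpha(x,y) := \ul u_\epsilon(x) - \ol u(y) - \tfrac{\alpha}{2}|x-y|^2,$$
and letting $\alpha \ra +\infty$, one obtains penalization points $x_\alpha, y_\alpha \ra x_0$ together with symmetric matrices $X_\alpha \leq Y_\alpha$ such that $X_\alpha$ lies in the closure of the second-order superjet of $\ul u_\epsilon$ at $x_\alpha$ and $Y_\alpha$ lies in the closure of the second-order subjet of $\ol u$ at $y_\alpha$. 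The strict-subsolution step gives $\lambda(X_\alpha) \in \Gamma$ and $f(\lambda(X_\alpha)) \geq 1 + \eta_\epsilon$; Weyl's inequality combined with $X_\alpha \leq Y_\alpha$ forces $\lambda_i(Y_\alpha) \geq \lambda_i(X_\alpha)$ for the sorted eigenvalues, so that $\lambda(Y_\alpha) - \lambda(X_\alpha) \in \ol{\Gamma^+}$ and hence $\lambda(Y_\alpha) \in \Gamma$ by the convexity of $\Gamma \supset \Gamma^+$. The supersolution property then gives $f(\lambda(Y_\alpha)) \leq 1$, and the coordinatewise monotonicity of $f$ yields $f(\lambda(X_\alpha)) \leq f(\lambda(Y_\alpha))$, producing the contradiction $1 + \eta_\epsilon \leq 1$. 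Sending $\epsilon \ra 0$ finishes the proof.

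The main obstacle---really the only subtle point---is verifying the admissibility $\lambda(Y_\alpha) \in \Gamma$ so that the supersolution inequality is actually available; in the Hessian-quotient setting of \sref{sec.prel} this matters because supersolutions are tested only against $k$-convex $C^2$ functions. The transfer of admissibility from $X_\alpha$ to $Y_\alpha$ via $X_\alpha \leq Y_\alpha$ and the cone-plus-convexity structure of $\Gamma$ is the crucial ingredient; once this is secured, the remaining details are routine and may be read off from \cite{CIL92,Urb90,BLL14,LB14}.
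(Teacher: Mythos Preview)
The paper does not give its own proof of this lemma: it is stated as a preliminary result and the proof is deferred entirely to the references \cite{CNS85}, \cite{Jen88}, \cite{Ish89}, \cite{Urb90}, \cite{CIL92}, \cite{BLL14} and \cite{LB14}. Your outline is exactly the standard Jensen--Ishii doubling-of-variables argument found in those references, so in substance you are reproducing what the paper cites rather than offering an alternative.

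The argument is correct. One small imprecision: you do not need a \emph{uniform} gap $\eta_\epsilon$ in advance. Once Ishii's lemma has produced $X_\alpha\le Y_\alpha$ with $\lambda(X_\alpha-2\epsilon I)\in\Gamma$ and $f(\lambda(X_\alpha-2\epsilon I))\ge 1$, the identity $\lambda(X_\alpha)=\lambda(X_\alpha-2\epsilon I)+2\epsilon(1,\dots,1)$ and the strict monotonicity $f_{\lambda_i}>0$ give $f(\lambda(X_\alpha))>1$ for that particular $\alpha$; combined with $\lambda(Y_\alpha)\ge\lambda(X_\alpha)$ (Weyl) and the supersolution inequality $f(\lambda(Y_\alpha))\le 1$, this is already a contradiction, so no compactness or uniformity on the eigenvalue set is required. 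Your identification of the admissibility transfer $\lambda(X_\alpha)\in\Gamma\Rightarrow\lambda(Y_\alpha)\in\Gamma$ as the key point is exactly right, and your justification via the open-convex-cone property $\Gamma+\ol{\Gamma^+}\subset\Gamma$ is the standard one.
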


\begin{lemma}[Perron's method]\label{lem.pm}
Assume that $\Gamma^+\subset\Gamma\subset\R^n$ is
an open convex symmetric cone with its vertex at the origin,
and suppose $f\in C^1(\Gamma)$ and
$f_{\lambda_i}(\lambda)>0$, $\forall \lambda\in\Gamma$,
$\forall i=1,2,...,n$.
Let $\Omega\subset\R^n$ be a domain, $\varphi\in C^0(\p\Omega)$
and let $\ul{u},\ol{u}\in C^0(\ol\Omega)$ satisfying
\[f\left(\lambda\left(D^2\ul{u}\right)\right)\geq 1
\geq f\left(\lambda\left(D^2\ol{u}\right)\right)\]
in $\Omega$ in the viscosity sense.
Suppose $\ul{u}\leq \ol{u}$ in $\Omega$,
$\ul{u}=\varphi$ on $\p\Omega$
(and additionally
\[\lim_{|x|\ra+\infty}\left(\ul{u}-\ol{u}\right)(x)=0\]
provided $\Omega$ is unbounded).
Then
\begin{eqnarray*}
u(x)&:=&\sup\Big\{v(x)\big{|}v\in C^0(\Omega),~
\ul{u}\leq v\leq \ol{u}~\mbox{in}~\Omega,~
f\left(\lambda\left(D^2v\right)\right)\geq 1~\mbox{in}~\Omega\\
&~&\qquad\mbox{in the viscosity sense},~
v=\varphi~\mbox{on}~\p\Omega\Big\}
\end{eqnarray*}
is the unique viscosity solution of the Dirichlet problem
\[\left\{
\begin{aligned}
f\left(\lambda\left(D^2u\right)\right)=1 \qquad &\mbox{in}& \Omega,\\
u=\varphi \qquad &\mbox{on}& \p\Omega.
\end{aligned}\right.
\]
\end{lemma}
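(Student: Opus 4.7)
The plan is to adapt the classical Ishii--Perron argument, with the monotonicity hypothesis $f_{\lambda_i}>0$ playing the role of degenerate ellipticity. First I would verify that the admissible family
\[\mathcal S:=\{v\in C^0(\Omega):\ul u\leq v\leq\ol u,~f(\lambda(D^2v))\geq1~\text{viscously},~v=\varphi~\text{on}~\p\Omega\}\]
is nonempty (it contains $\ul u$), and that every $v\in\mathcal S$ automatically satisfies $v\leq\ol u$ in $\Omega$ by \lref{lem.cp}. Thus $u:=\sup\mathcal S$ is well-defined with $\ul u\leq u\leq\ol u$ on $\ol\Omega$, which already forces $u=\varphi$ on $\p\Omega$ from the hypothesis $\ul u=\varphi$ there. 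Uniqueness of the solution to the Dirichlet problem is immediate from \lref{lem.cp} applied to any two candidate solutions (with the asymptotic condition at infinity in the unbounded case).

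Next I would show that $u$ is itself a viscosity subsolution. This is the standard ``sup of subsolutions is a subsolution'' fact: the upper semicontinuous envelope $u^{\ast}$ of the supremum of a uniformly locally bounded family of viscosity subsolutions remains a viscosity subsolution of $f\geq1$. Since $\ul u\leq u^\ast\leq\ol u$ and $u^\ast=\varphi$ on $\p\Omega$, we get $u^\ast\in\mathcal S$, which forces $u^\ast=u$; in particular $u$ is continuous from above and is a subsolution.

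The heart of the proof is showing $u$ is also a viscosity supersolution, which I would argue by contradiction. Suppose there exist $x^{\ast}\in\Omega$ and a $k$-convex test $v\in C^2(\Omega)$ with $v\leq u$ in $\Omega$, $v(x^{\ast})=u(x^{\ast})$, and $f(\lambda(D^2v(x^{\ast})))>1$. Using $f\in C^1(\Gamma)$ with $f_{\lambda_i}>0$ and the openness of $\Gamma$, I can find $r,\delta,\varepsilon>0$ such that on the ball $B_r(x^{\ast})\Subset\Omega$ the perturbation
\[w(x):=v(x)+\varepsilon-\delta\abs{x-x^{\ast}}^2\]
is classically $k$-convex and classically satisfies $f(\lambda(D^2w))\geq1$, while $w<v$ on $\p B_r(x^{\ast})$ (choosing $\varepsilon<\delta r^2/2$) and $w(x^{\ast})>u(x^{\ast})$. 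Define $\tilde u:=\max(u,w)$ on $\overline{B_r(x^{\ast})}$ and $\tilde u:=u$ elsewhere. Then $\tilde u$ is continuous (coinciding with $u$ near $\p B_r(x^{\ast})$), the maximum of two subsolutions is a subsolution, and $\tilde u\leq\ol u$ follows from \lref{lem.cp} applied locally on $B_r(x^{\ast})$ between $w$ and $\ol u$. Hence $\tilde u\in\mathcal S$, yet $\tilde u(x^{\ast})>u(x^{\ast})$, contradicting the definition of $u$ as the supremum.

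The main obstacle is this bump-construction step. The delicate points are: (i) ensuring $\lambda(D^2w)\in\Gamma$ throughout $B_r(x^{\ast})$, handled by openness of $\Gamma$ together with smallness of $\delta$ and $r$; (ii) maintaining $f(\lambda(D^2w))\geq1$ pointwise, handled by the strict inequality $f(\lambda(D^2v(x^{\ast})))>1$ and continuity of $f$; and (iii) verifying $\tilde u\leq\ol u$ globally, which in the unbounded case leans on the hypothesis $(\ul u-\ol u)(x)\to0$ at infinity so that \lref{lem.cp} can be invoked. Once all three are in place, the contradiction completes the proof that $u$ is a viscosity solution.
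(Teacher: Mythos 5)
The paper does not prove this lemma itself; it cites \cite{CNS85}, \cite{Jen88}, \cite{Ish89}, \cite{Urb90}, \cite{CIL92} and defers to \cite{BLL14}, \cite{LB14} for details. Your sketch follows the classical Ishii--Perron route that those sources use, and the core bump construction (perturbing a test function $v$ with $f(\lambda(D^2v(x^\ast)))>1$ by $\varepsilon-\delta|x-x^\ast|^2$, using openness of $\Gamma$, monotonicity of $f$, and local comparison against $\ol u$ to keep $\tilde u$ admissible) is sound in outline, as is the reduction of uniqueness to the comparison principle.

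There is, however, a genuine gap at the boundary. You assert $u^\ast=\varphi$ on $\p\Omega$, but that does not follow from $\ul u\leq u\leq\ol u$: the hypotheses give $\ul u=\varphi$ on $\p\Omega$ (hence $\liminf_{x\to\xi}u(x)\geq\varphi(\xi)$), but say nothing about $\ol u|_{\p\Omega}$, and indeed in the paper's application $\ol u=\tfrac12 x^TAx+c$ is \emph{strictly} above $\varphi$ on $\p D$. So $u\leq\ol u$ yields no upper control at the boundary, and without more, $\limsup_{x\to\xi}u(x)\leq\varphi(\xi)$ is unproven. The standard repair is to produce, for each $\xi\in\p\Omega$, a continuous supersolution $W_\xi$ (or a local upper barrier) with $W_\xi(\xi)=\varphi(\xi)$ and $W_\xi\geq\varphi$ on $\p\Omega$, so that by \lref{lem.cp} every $v$ in the admissible family satisfies $v\leq W_\xi$, giving $\limsup_{x\to\xi}u(x)\leq W_\xi(\xi)=\varphi(\xi)$. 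In the paper's situation such barriers are available because $\p D$ is strictly convex and $\varphi\in C^2$: one may take $W_\xi$ to be an affine function (with $D^2W_\xi=0$, hence $\sigma_k(\lambda(D^2W_\xi))=0<1\cdot\sigma_l$, a supersolution) supported on a tangent hyperplane at $\xi$. Related to this, concluding $u^\ast\in\mathcal S$ to force $u^\ast=u$ is slightly circular, since membership in $\mathcal S$ requires $u^\ast\in C^0(\Omega)$, which is part of what you are proving; the cleaner route is to apply the bump argument to the lower semicontinuous envelope $u_\ast$ to show it is a supersolution, and then deduce $u^\ast\leq u_\ast$ (hence $u^\ast=u_\ast=u\in C^0$) from the comparison principle together with the boundary barriers above.
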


\begin{remark}
In order to apply the above lemmas
to the Hessian quotient operator
\[f(\lambda):=\frac{\sigma_k(\lambda)}{\sigma_l(\lambda)}\]
in the cone $\Gamma:=\Gamma_k$,
we need to show that
\begin{equation}\label{eqn.pskl}
\p_{\lambda_i}\left(\frac{\sigma_k(\lambda)}{\sigma_l(\lambda)}\right)
>0,~\forall 1\leq i\leq n,
~\forall 0\leq l<k\leq n,
~\forall\lambda\in\Gamma_k,
\end{equation}
which indeed indicates that
the Hessian quotient equations \eref{eqn.hqe}
are elliptic equations with respect
to its $k$-convex solution $u$.

Indeed, for $l=0$, \eref{eqn.pskl} is clear
in light of \eref{eqn.pisjp}.
For $1\leq l<k\leq n$, since
\[\p_{\lambda_i}\sigma_k(\lambda)
=\frac{\sigma_k(\lambda)-\sigma_{k;i}(\lambda)}{\lambda_i}
=\sigma_{k-1;i}(\lambda)\]
according to \eref{eqn.sk}, we have
\[\p_{\lambda_i}\left(\frac{\sigma_k(\lambda)}
{\sigma_l(\lambda)}\right)
=\frac{\sigma_{k-1;i}(\lambda)\sigma_l(\lambda)
-\sigma_k(\lambda)\sigma_{l-1;i}(\lambda)}
{(\sigma_l(\lambda))^2}.\]
Thus to prove \eref{eqn.pskl}, it remains to verify
\[\sigma_{k-1;i}(\lambda)\sigma_l(\lambda)
\geq\sigma_k(\lambda)\sigma_{l-1;i}(\lambda).\]
In view of \eref{eqn.sk}, this is equivalent to
\[\sigma_{k-1;i}(\lambda)\sigma_{l;i}(\lambda)
\geq\sigma_{k;i}(\lambda)\sigma_{l-1;i}(\lambda),\]
which in turn is equivalent to
\[\frac{\sigma_{l;i}(\lambda)}{\sigma_{l-1;i}(\lambda)}
\geq\frac{\sigma_{k;i}(\lambda)}{\sigma_{k-1;i}(\lambda)},\]
since
$\sigma_{j;i}(\lambda)=\p_{\lambda_i}\sigma_{j+1}(\lambda)>0$,
$\forall 1\leq i\leq n$,
$\forall 0\leq j\leq k-1$,
$\forall\lambda\in\Gamma_k$,
according to \eref{eqn.pisjp}.
For the proof of the latter, we only need to note that
\[\frac{\sigma_{j;i}(\lambda)}{\sigma_{j-1;i}(\lambda)}
\geq\frac{\sigma_{j+1;i}(\lambda)}{\sigma_{j;i}(\lambda)},\]
which is the variation of the Newton inequality(see \cite{HLP34})
\[\sigma_{j-1}(\lambda)\sigma_{j+1}(\lambda)
\leq\left(\sigma_{j}(\lambda)\right)^2,
~\forall\lambda\in\R^n,\]
as we met in the proof of \lref{lem.xik}.\qed
\end{remark}

\section{Proof of the main theorem}\label{sec.pmaint}

\subsection{Construction of the subsolutions}\label{subsec.csubsol}
\quad

The purpose of this subsection is to prove
the following key lemma and then use it
to construct subsolutions of \eref{eqn.hqe}.
We remark that for the generalized radially symmetric subsolution
$\Phi(x)=\phi(r)$ that we intend to construct,
the solution $\psi(r)$ discussed in the the following lemma
actually is equivalent to $\phi'(r)/r$
(see the proof of the \lref{lem.Phi-subsol}).
\begin{lemma}\label{lem.psi}
Let $0\leq l<k\leq n$, $n\geq3$, $A\in\mathscr{\wt A}_{k,l}$,
$a:=(a_1,a_2,...,a_n):=\lambda(A)$,
$0<a_1\leq a_2\leq...\leq a_n$ and $\beta\geq 1$.
Then the problem
\begin{equation}\label{eqn.psi}
\left\{
\begin{aligned}
\psi(r)^k+\ol{\xi}_k(a)r\psi(r)^{k-1}\psi'(r)\quad&\\
-\psi(r)^l-\ul{\xi}_l(a)r\psi(r)^{l-1}\psi'(r)&=0,~r>1,\\[0.1cm]
\psi(1)&=\beta,
\end{aligned}
\right.
\end{equation}
has a unique smooth solution $\psi(r)=\psi(r,\beta)$ on $[1,+\infty)$,
which satisfies
\begin{enumerate}[\quad(i)]
\item[(i)] $1\leq\psi(r,\beta)\leq\beta$, $\p_r\psi(r,\beta)\leq0$,
$\forall r\geq1$, $\forall\beta\geq1$.
More specifically,
$\psi(r,1)\equiv 1$, $\psi(1,\beta)\equiv\beta$;
and $1<\psi(r,\beta)<\beta$, $\forall r>1$, $\forall\beta>1$.

\item[(ii)] $\psi(r,\beta)$ is continuous
and strictly increasing with respect to $\beta$
and \[\lim_{\beta\ra+\infty}\psi(r,\beta)=+\infty,~\forall r\geq 1.\]

\item[(iii)] $\psi(r,\beta)=1+O(r^{-m})~(r\ra+\infty)$,
where $m=m_{k,l}(a)\in(2,n]$
and the $O(\cdot)$ depends only on
$k$, $l$, $\lambda(A)$ and $\beta$.
\end{enumerate}
\end{lemma}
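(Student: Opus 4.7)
The plan is to exploit the fact that the ODE in \eref{eqn.psi} can be recast in the explicit normal form
\[
r\psi'(r)=-\frac{\psi(r)\bigl(\psi(r)^{k-l}-1\bigr)}{\ol{\xi}_k(a)\,\psi(r)^{k-l}-\ul{\xi}_l(a)},
\]
which makes sense because the denominator stays bounded below by $\ol{\xi}_k(a)-\ul{\xi}_l(a)>0$ once $\psi\geq 1$, the positivity coming from $k>l$ via \lref{lem.xik} and being exactly the condition that $m_{k,l}(a)$ is finite. Thus the right-hand side is a smooth, Lipschitz function of $\psi$ on $[1,+\infty)$, and classical Picard--Lindel\"of theory yields a unique smooth local solution for every $\beta\geq 1$. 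For $\beta=1$ the constant function $\psi\equiv 1$ solves the system, whereas for $\beta>1$ the right-hand side is strictly negative, so $\psi$ is decreasing as long as $\psi>1$; by uniqueness the orbit cannot touch the equilibrium $\psi\equiv 1$, hence $1<\psi(r,\beta)<\beta$ on the maximal interval of existence, which is therefore all of $[1,+\infty)$. This establishes item \textit{(i)}.

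For item \textit{(ii)}, continuous dependence on initial data gives continuity in $\beta$ and the comparison principle for scalar ODEs forces strict monotonicity, since two solutions starting at distinct values in $(1,+\infty)$ cannot cross. The convergence $\psi(r,\beta)\to+\infty$ as $\beta\to+\infty$ will drop out of the explicit formula derived below rather than being argued separately.

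The central computation, yielding \textit{(iii)} and closing \textit{(ii)} simultaneously, is separation of variables. With $v:=\psi^{k-l}$ the partial fraction decomposition
\[
\frac{\ol{\xi}_k(a)\,v-\ul{\xi}_l(a)}{v(v-1)}=\frac{\ul{\xi}_l(a)}{v}+\frac{\ol{\xi}_k(a)-\ul{\xi}_l(a)}{v-1}
\]
recasts the equation in the form $m_{k,l}(a)\,d\ln r+\tfrac{m_{k,l}(a)\ul{\xi}_l(a)}{k-l}\,d\ln\psi^{k-l}+d\ln(\psi^{k-l}-1)=0$, which is the identity referenced as \eref{eqn.mdlnr}. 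Integrating from $1$ to $r$ with $\psi(1)=\beta$ produces the implicit relation
\[
\bigl(\psi^{k-l}-1\bigr)\,\psi^{\,m_{k,l}(a)\,\ul{\xi}_l(a)}\,r^{\,m_{k,l}(a)}=\bigl(\beta^{k-l}-1\bigr)\,\beta^{\,m_{k,l}(a)\,\ul{\xi}_l(a)}.
\]
From this closed form the asymptotics in \textit{(iii)} are transparent: as $r\to+\infty$ one has $\psi\to 1$, so $\psi^{k-l}-1\sim(k-l)(\psi-1)$ and $\psi^{m_{k,l}\ul{\xi}_l}\to 1$, giving $\psi(r,\beta)-1=O(r^{-m_{k,l}(a)})$ with a constant depending only on $k,l,\lambda(A),\beta$; the exponent $m_{k,l}(a)$ lies in $(2,n]$ precisely because $A\in\mathscr{\wt A}_{k,l}$ combined with \cref{cor.m}. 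The missing limit in \textit{(ii)} is equally immediate, since the right-hand side of the implicit relation is an increasing function of $\beta$ tending to $+\infty$, forcing $\psi(r,\beta)\to+\infty$ at each fixed $r$. The main technical care I anticipate is book-keeping the degenerate case $l=0$, where $\ul{\xi}_l(a)=0$ and the $\psi$-factor in the implicit relation collapses, but this simplifies rather than obstructs the argument.
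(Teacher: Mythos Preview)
Your proposal is correct and follows essentially the same route as the paper: rewrite the ODE in the normal form $\psi'=g(\psi)/r$, invoke Picard--Lindel\"of together with the fact that orbits cannot cross the equilibrium $\psi\equiv 1$ for (i), then separate variables and integrate to an implicit relation (your formula $(\psi^{k-l}-1)\psi^{m\ul{\xi}_l}r^{m}=\text{const}$ is algebraically identical to the paper's \eref{eqn.mdlnr}, since $m\ol{\xi}_k-(k-l)=m\ul{\xi}_l$) to obtain (iii) and the limit in (ii). The one place you deviate is the strict monotonicity in $\beta$: the paper differentiates the flow, writes down the variational equation for $\partial_\beta\psi$, and bounds $g'(\psi)$ above by a negative constant to get $0<\partial_\beta\psi\leq r^{-C}$, whereas your appeal to the scalar comparison principle (distinct orbits cannot cross) is shorter and equally valid.
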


\begin{proof}
For brevity, we will often write
$\psi(r)$ or $\psi(r,\beta)$ (respectively, $\ul\xi(a),\ol\xi(a)$)
simply as $\psi$ (respectively, $\ul\xi,\ol\xi$),
when there is no confusion.
The proof of this lemma now will be divided into three steps.

\medskip

\emph{Step 1.}\quad
We deduce from \eref{eqn.psi} that
\begin{equation}\label{eqn.psi-kl}
\psi^k-\psi^l=-\frac{r}{dr}\left(\ol{\xi}_k\psi^{k-1}
-\ul{\xi}_l\psi^{l-1}\right)d\psi
\end{equation}
and
\begin{equation}\label{eqn.psid}
\frac{d\psi}{dr}=-\frac{1}{r}\cdot\frac{\psi^k-\psi^l}
{\ol{\xi}_k\psi^{k-1}-\ul{\xi}_l\psi^{l-1}}
=-\frac{1}{r}\cdot\frac{\psi}{\ol{\xi}_k}\cdot
\frac{\psi^{k-l}-1}{\psi^{k-l}-\frac{\ul{\xi}_l}{\ol{\xi}_k}}
=:\frac{g(\psi)}{r},
\end{equation}
where we set
\[g(\nu):=-\frac{\nu}{\ol{\xi}_k}\cdot
\frac{\nu^{k-l}-1}{\nu^{k-l}-\frac{\ul{\xi}_l}{\ol{\xi}_k}}.\]
Hence the problem \eref{eqn.psi}
is equivalent to the following problem
\begin{equation}\label{eqn.psi-g}
\left\{
\begin{aligned}
\psi'(r)&=\frac{g(\psi(r))}{r},~r>1,\\
\psi(1)&=\beta.
\end{aligned}
\right.
\end{equation}

If $\beta=1$, then $\psi(r)\equiv 1$ is a solution
of the problem \eref{eqn.psi-g} since $g(1)=0$.
Thus, by the uniqueness theorem
for the solution of the ordinary differential equation,
we know that $\psi(r,1)\equiv 1$ is the unique solution
satisfies the problem \eref{eqn.psi-g}.

Now if $\beta>1$, since
\[h(r,\nu):=\frac{g(\nu)}{r}
\in C^{\infty}((1,+\infty)\times(\nu_0,+\infty)),\]
where \[\frac{\ul{\xi}_l}{\ol{\xi}_k}<\nu_0<1\]
(note that $\nu_0$ exists, since we have
$\ul{\xi}_l\leq l/n<k/n\leq\ol{\xi}_k$ by \lref{lem.xik}),
by the existence theorem
(the Picard-Lindel\"{o}f theorem)
and the theorem of the maximal interval of existence
for the solution
of the initial value problem
of the ordinary differential equation,
we know that the problem \eref{eqn.psi-g}
has a unique smooth solution $\psi(r)=\psi(r,\beta)$
locally around the initial point
and can be extended to a maximal interval
$[1,\zeta)$ in which $\zeta$
can only be one of the following cases:
\begin{enumerate}[\qquad($1^\circ$)]
\item $\zeta=+\infty$;

\item $\zeta<+\infty$, $\psi(r)$ is unbounded on $[1,\zeta)$;

\item $\zeta<+\infty$, $\psi(r)$ converges to some point on $\{\nu=\nu_0\}$
as $r\ra\zeta-$.
\end{enumerate}
Since \[\frac{g(\psi(r))}{r}<0,~\forall \psi(r)>1,\]
we see that $\psi(r)=\psi(r,\beta)$
is strictly decreasing with respect to $r$
which exclude the case ($2^\circ$) above.
We claim now that
the case ($3^\circ$) can also be excluded.
Otherwise, the solution curve
must intersect with $\{\nu=1\}$ at some point $(r_0,\psi(r_0))$ on it
and then tends to $\{\nu=\nu_0\}$ after crossing it.
But $\psi(r)\equiv 1$ is also a solution through $(r_0,\psi(r_0))$
which contradicts the uniqueness theorem
for the solution
of the initial value problem
of the ordinary differential equation.
Thus we complete the proof of the existence and uniqueness
of the solution $\psi(r)=\psi(r,\beta)$
of the problem \eref{eqn.psi} on $[1,+\infty)$.

Due to the same reason, i.e.,
$\psi(r,\beta)$ is strictly decreasing with respect to $r$
and the solution curve can not cross $\{\nu=1\}$ provided $\beta>1$,
assertion $\emph{(i)}$ of the lemma is also clear now,
that is, $1<\psi(r,\beta)<\beta$, $\forall r>1$, $\forall\beta>1$.

\medskip

\emph{Step 2.}\quad
By the theorem of the differentiability of the solution
with respect to the initial value,
we can differentiate $\psi(r,\beta)$
with respect to $\beta$ as blew:
\[\left\{
\begin{aligned}
&\frac{\p\psi(r,\beta)}{\p r}=\frac{g(\psi(r,\beta))}{r},&\\
&\psi(1,\beta)=\beta;&
\end{aligned}\right.\]
\[\Ra\left\{
\begin{aligned}
&\frac{\p^2 \psi(r,\beta)}{\p \beta\p r}
=\frac{g'(\psi(r,\beta))}{r}\cdot\frac{\p\psi(r,\beta)}{\p\beta},&\\
&\frac{\p\psi(1,\beta)}{\p\beta}=1.&
\end{aligned}\right.\]
Let
\[v(r):=\frac{\p\psi(r,\beta)}{\p\beta}.\]
We have
\[\left\{
\begin{aligned}
&\frac{dv}{dr}
=\frac{g'(\psi(r,\beta))}{r}\cdot v,&\\
&v(1)=1.&
\end{aligned}\right.\]
Therefore we can deduce that
\[\frac{dv}{v}
=\frac{g'(\psi(r,\beta))}{r}dr,\]
and hence
\[\frac{\p\psi(r,\beta)}{\p\beta}=v(r)
=\exp\int_1^r{\frac{g'(\psi(\tau,\beta))}{\tau}d\tau}.\]

Since
\[g'(\nu)=-\frac{\nu^{k-l}-1}{\ol{\xi}_k\nu^{k-l}-\ul{\xi}_l}
-\frac{\nu}{\ol{\xi}_k}\cdot
\frac{-\left(\frac{\ul{\xi}_l}{\ol{\xi}_k}-1\right)(k-l)\nu^{k-l-1}}
{\left(\nu^{k-l}-\frac{\ul{\xi}_l}{\ol{\xi}_k}\right)^2}\]
and
\begin{equation}\label{eqn.onepsibeta}
0<\frac{\ul{\xi}_l}{\ol{\xi}_k}
<1\leq\psi(r,\beta)\leq\beta=\psi(1),~\forall r\geq 1,
\end{equation}
we have
\[g'(\psi(r,\beta))\leq
-\frac{(k-l)\left(1-\frac{\ul{\xi}_l}{\ol{\xi}_k}\right)}
{\ol{\xi}_k
\left(\beta^{k-l}-\frac{\ul{\xi}_l}{\ol{\xi}_k}\right)^2}
=-C\left(k,l,\lambda(A),\beta\right)<0,\]
and hence
\[0<\frac{\p\psi(r,\beta)}{\p\beta}
\leq r^{-C}\leq 1,~\forall r\geq 1.\]
Thus $\psi(r,\beta)$ is strictly increasing with respect to $\beta$.

\medskip

\emph{Step 3.}\quad
By \eref{eqn.psi-kl}, we have
\begin{eqnarray*}
-d\ln r=-\frac{dr}{r}&=&\frac{\ol{\xi}_k\psi^{k-1}
-\ul{\xi}_l\psi^{l-1}}{\psi^k-\psi^l}d\psi
=\frac{\ol{\xi}_k}{\psi}\cdot
\frac{\psi^{k-l}-\frac{\ul{\xi}_l}{\ol{\xi}_k}}{\psi^{k-l}-1}d\psi\\
&=&\frac{\ol{\xi}_k}{\psi}\left(1
+\frac{1-\frac{\ul{\xi}_l}{\ol{\xi}_k}}{\psi^{k-l}-1}\right)d\psi
=\left(\frac{\ol{\xi}_k}{\psi}
+\frac{\ol{\xi}_k-\ul{\xi}_l}{\psi(\psi^{k-l}-1)}\right)d\psi\\
&=&\ol{\xi}_kd\ln\psi-\frac{\ol{\xi}_k-\ul{\xi}_l}{k-l}
d\ln\frac{\psi^{k-l}}{\psi^{k-l}-1}\\
&=&d\ln\left(\psi^{\ol{\xi}_k}\left(1-\psi^{-k+l}\right)
^{\frac{\ol{\xi}_k-\ul{\xi}_l}{k-l}}\right).
\end{eqnarray*}
Hence
\begin{equation}\label{eqn.mdlnr}
-md\ln r=d\ln\left(\psi(r)^{m\ol{\xi}_k}
\left(1-\psi(r)^{-k+l}\right)\right),
\end{equation}
where
\[m:=m_{k,l}(a):=\frac{k-l}{\ol{\xi}_k-\ul{\xi}_l},\]
which has been already defined
in \eref{eqn.mkla} in \ssref{subsec.xi}.
Note that, by the assumptions on $A$ and \cref{cor.m},
we have $2<m\leq n$ and $m\ol{\xi}_k>k-l$.
Integrating \eref{eqn.mdlnr} from $1$ to $r$
and recalling $\psi(1)=\beta\geq1$,
we get
\[\ln\left(\psi(r)^{m\ol{\xi}_k}
\left(1-\psi(r)^{-k+l}\right)\right)
=\ln\left(\beta^{m\ol{\xi}_k}
\left(1-\beta^{-k+l}\right)\right)+\ln r^{-m},\]
and hence
\[\psi(r)^{m\ol{\xi}_k}
\left(1-\psi(r)^{-k+l}\right)
=\beta^{m\ol{\xi}_k}
\left(1-\beta^{-k+l}\right)r^{-m}:=B(\beta)r^{-m},\]
where we set
\[B(\beta):=\beta^{m\ol{\xi}_k}\left(1-\beta^{-k+l}\right)
=\beta^{m\ol{\xi}_k-k+l}\left(\beta^{k-l}-1\right).\]
Since
\begin{eqnarray*}
&~&\psi(r)^{m\ol{\xi}_k}
\left(1-\psi(r)^{-k+l}\right)
=\psi(r)^{m\ol{\xi}_k-k+l}
\left(\psi(r)^{k-l}-1\right)\\
&=&\psi(r)^{m\ol{\xi}_k-k+l}\left(\psi(r)-1\right)
\left(\psi(r)^{k-l-1}+\psi(r)^{k-l-2}+...+\psi(r)+1\right),
\end{eqnarray*}
we thus conclude that
\begin{equation}\label{eqn.psimoar}
\frac{\psi(r)-1}{r^{-m}}
=\left(\psi(r)^{m\ol{\xi}_k-k+l}
\left(\psi(r)^{k-l-1}+\psi(r)^{k-l-2}+
...+\psi(r)+1\right)\right)^{-1}B(\beta).
\end{equation}
Note that $m\ol{\xi}_k-k+l>0$ and
\[\beta-1=\left(\beta^{m\ol{\xi}_k-k+l}
\left(\beta^{k-l-1}+\beta^{k-l-2}+
...+b+1\right)\right)^{-1}B(\beta).\]
Recalling \eref{eqn.onepsibeta}, we obtain
\begin{equation}\label{eqn.psi-b}
\beta-1\leq\frac{\psi(r,\beta)-1}{r^{-m}}
\leq\frac{B(\beta)}{k-l},
~\forall r\geq 1.
\end{equation}
Thus we have
\[\lim_{\beta\ra+\infty}\psi(r,\beta)=+\infty,
~\forall r\geq 1,\]
and
\[\psi(r,\beta)\ra 1~(r\ra+\infty),
~\forall \beta\geq 1.\]
Substituting the latter to \eref{eqn.psimoar}, we get
\[\frac{\psi(r,\beta)-1}{r^{-m}}
\ra\frac{B(\beta)}{k-l}~(r\ra+\infty),
~\forall \beta\geq 1.\]
Therefore
\[\psi(r,\beta)=1+\frac{B(\beta)}{k-l}r^{-m}+o(r^{-m})
=1+O(r^{-m})~(r\ra+\infty),\]
where $o(\cdot)$ and $O(\cdot)$
depend only on $k$, $l$, $\lambda(A)$ and $\beta$.
This completes the proof of the lemma.
\end{proof}


\begin{remark}
For $l=0$, i.e., the Hessian equation $\sigma_k(\lambda)=1$,
we have an easy proof. Consider the problem
\begin{equation}\label{eqn.psi-he}
\left\{
\begin{aligned}
\psi(r)^k+\ol{\xi}_k(a)r\psi(r)^{k-1}\psi'(r)&=1,~r>1,\\[0.1cm]
\psi(1)&=\beta.
\end{aligned}
\right.
\end{equation}
Set $m:=m_{k,0}(a)=k/\ol\xi_k$.
We have
\[\psi^k-1=-r\ol\xi_k\psi^{k-1}\frac{d\psi}{dr}
=-\frac{1}{m}\cdot r\cdot\frac{d\left(\psi^k-1\right)}{dr},\]
\[\frac{d\left(\psi^k-1\right)}{\psi^k-1}
=-m\frac{dr}{r}\]
and
\[d\ln\left(\psi(r)^k-1\right)=-md\ln r=d\ln r^{-m}.\]
Integrating it from $1$ to $r$
and recalling $\psi(1)=\beta\geq1$,
we get
\[\psi(r)^k-1=\left(\psi(1)^k-1\right)r^{-m}
=(\beta^k-1)r^{-m}\]
and
\begin{eqnarray}
\nonumber\psi(r)&=&\left(1+(\beta^k-1)r^{-m}\right)^{\frac{1}{k}}\\
&=&\left(1+(\beta^k-1)r^{-\frac{k}{\ol\xi_k}}\right)^{\frac{1}{k}}
\label{eqn.compare}\\
\nonumber&=&1+\frac{\beta^k-1}{k}r^{-m}+o(r^{-m})=1+O(r^{-m})~(r\ra+\infty).
\end{eqnarray}
It is obvious that the $\psi(r)$ that
we here solved from \eref{eqn.psi-he} for $l=0$
satisfies all the conclusions of \lref{lem.psi}.
Moreover, comparing \eref{eqn.compare} with
the corresponding ones in \cite{BLL14} and in \cite{CL03},
we observe that our method actually provides a systematic way
for construction of the subsolutions,
which gives results containing
the previous ones as special cases.
\qed
\end{remark}

\medskip

Set
\[\mu_R(\beta):=\int_R^{+\infty}\tau\big(\psi(\tau,\beta)-1\big)d\tau,
\quad\forall R\geq 1,~\forall\beta\geq 1.\]
Note that the integral on the right hand side
is convergent in view of \lref{lem.psi}-\textsl{(iii)}.
Moreover, as an application of \lref{lem.psi}, we have the following.
\begin{corollary}\label{cor.mub}
$\mu_R(\beta)$ is nonnegative, continuous and
strictly increasing with respect to $\beta$.
Furthermore,
\[\mu_R(\beta)\geq\int_R^{+\infty}(\beta-1)r^{-m+1}d\tau
\ra+\infty~(\beta\ra+\infty),~\forall R\geq 1;\]
and
\[\mu_R(\beta)=O(R^{-m+2})~(R\ra+\infty),~\forall\beta\geq 1.\]
\end{corollary}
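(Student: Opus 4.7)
The plan is to read off every assertion from the two-sided pointwise bound
\[
(\beta-1)\tau^{-m}\;\leq\;\psi(\tau,\beta)-1\;\leq\;\frac{B(\beta)}{k-l}\,\tau^{-m},\qquad\forall\tau\geq 1,
\]
which is exactly \eref{eqn.psi-b} in the proof of \lref{lem.psi}, combined with two other facts already at our disposal: the pointwise continuity and strict monotonicity in $\beta$ coming from \lref{lem.psi}-\textsl{(i),(ii)}, and the crucial inequality $m>2$ from \cref{cor.m} which makes the weight $\tau^{-m+1}$ integrable at infinity.

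First I would handle statements (1) and the asymptotic decay in (3) together. Nonnegativity of $\mu_R(\beta)$ is immediate from $\psi(\tau,\beta)\geq 1$. Multiplying the upper bound by $\tau$ and integrating from $R$ to $+\infty$ gives
\[
\mu_R(\beta)\;\leq\;\frac{B(\beta)}{k-l}\int_R^{+\infty}\tau^{-m+1}\,d\tau
\;=\;\frac{B(\beta)}{(k-l)(m-2)}R^{-m+2},
\]
which is the desired $O(R^{-m+2})$ estimate and simultaneously produces an integrable majorant for use below. For the lower half of the corollary, the same integration applied to the lower bound yields
\[
\mu_R(\beta)\;\geq\;(\beta-1)\int_R^{+\infty}\tau^{-m+1}\,d\tau\;=\;\frac{\beta-1}{m-2}R^{-m+2},
\]
which is exactly the inequality claimed in (2) and diverges to $+\infty$ as $\beta\to+\infty$ for every fixed $R\geq 1$.

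It remains to prove strict monotonicity and continuity in $\beta$. Strict monotonicity is immediate: for $\beta_2>\beta_1\geq 1$, \lref{lem.psi}-\textsl{(ii)} guarantees $\psi(\tau,\beta_2)>\psi(\tau,\beta_1)$ for every $\tau\geq 1$, so integrating $\tau$ times this strict inequality gives $\mu_R(\beta_2)>\mu_R(\beta_1)$. Continuity I would obtain by dominated convergence: on any bounded interval $[1,\beta_0]$, the function $B$ is continuous and hence bounded, so $\tau\bigl(\psi(\tau,\beta)-1\bigr)\leq \frac{\max_{[1,\beta_0]}B}{k-l}\,\tau^{-m+1}$ serves as an integrable majorant uniformly in $\beta\in[1,\beta_0]$, while pointwise continuity $\psi(\tau,\beta_n)\to\psi(\tau,\beta)$ comes from \lref{lem.psi}-\textsl{(ii)}. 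The main (and actually quite minor) obstacle is just this last dominated-convergence bookkeeping, namely verifying that the majorant can be chosen uniformly in a neighborhood of $\beta$; everything else is a direct consequence of the already-established pointwise bound on $\psi(\tau,\beta)-1$ together with $m>2$.
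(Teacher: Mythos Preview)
Your proposal is correct and follows essentially the same route as the paper: the paper's proof is a one-line citation of \lref{lem.psi}-\textsl{(ii),(iii)} together with the two-sided bound \eref{eqn.psi-b}, and you have simply written out in full the integrations and the dominated-convergence argument that this citation implicitly entails. No substantive difference in approach.
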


\begin{proof}
By \lref{lem.psi}-\textsl{(ii),(iii)}
and the above property \eref{eqn.psi-b} of $\psi(r,\beta)$.
\end{proof}

\medskip

For any $\alpha,\beta,\gamma\in\R$, $\beta,\gamma\geq1$
and for any diagonal matrix $A\in\mathscr{\wt A}_{k,l}$, let
\[\phi(r):=\phi_{\alpha,\beta,\gamma}(r)
:=\alpha+\int_\gamma^r\tau\psi(\tau,\beta)d\tau,
~\forall r\geq\gamma,\]
and
\[\Phi(x):=\Phi_{\alpha,\beta,\gamma,A}(x):=\phi(r)
:=\phi_{\alpha,\beta,\gamma}(r_A(x)),
~\forall x\in\R^n\setminus{E_\gamma},\]
where $r=r_A(x)=\sqrt{x^TAx}$.
Then we have
\begin{eqnarray}
\phi_{\alpha,\beta,\gamma}(r)
&=&\nonumber\int_\gamma^r\tau\big(\psi(\tau,\beta)-1\big)d\tau
+\frac{1}{2}r^2-\frac{1}{2}\gamma^2+\alpha\\
&=&\frac{1}{2}r^2+\left(\mu_\gamma(\beta)+\alpha
-\frac{1}{2}\gamma^2\right)-\mu_r(\beta)\label{eqn.phi-mu}\\
&=&\frac{1}{2}r^2+\left(\mu_\gamma(\beta)+\alpha
-\frac{1}{2}\gamma^2\right)+O(r^{-m+2})
~(r\ra+\infty),\quad\label{eqn.phi-O}
\end{eqnarray}
according to \cref{cor.mub},
and now we can assert that
\begin{lemma}\label{lem.Phi-subsol}
$\Phi$ is a smooth $k$-convex subsolution of \eref{eqn.hqe}
in $\R^n\setminus\ol{E_\gamma}$, that is,
\[\sigma_j\left(\lambda\left(D^2{\Phi(x)}\right)\right)\geq0,
~\forall 1\leq j\leq k,~\forall x\in\R^n\setminus\ol{E_\gamma},\]
and
\[\frac{\sigma_k\left(\lambda\left(D^2{\Phi(x)}\right)\right)}
{\sigma_l\left(\lambda\left(D^2{\Phi(x)}\right)\right)}\geq 1,
~\forall x\in\R^n\setminus\ol{E_\gamma}.\]
\end{lemma}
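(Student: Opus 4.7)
The plan is to reduce to the case of diagonal $A$ by an orthogonal change of variables, compute the Hessian $D^2\Phi$ explicitly, apply \lref{lem.skm} to express each $\sigma_j(\lambda(D^2\Phi))$ in terms of the quantity $\Xi_j$, and then combine this with the ODE satisfied by $\psi$ and with the defining extremal properties of $\ol\xi_k$ and $\ul\xi_l$ to obtain both the subsolution inequality and the $k$-convexity.

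More precisely, since the desired conclusions are invariant under an orthogonal change of variables that diagonalizes $A$, I may assume $A=\mathrm{diag}(a_1,\ldots,a_n)$ with $0<a_1\leq\cdots\leq a_n$. A direct chain-rule computation on $\Phi(x)=\phi(r_A(x))$, using $\phi'(r)=r\psi(r)$ and therefore $\phi''(r)=\psi(r)+r\psi'(r)$, yields
\[\p_{ij}\Phi(x)=\psi(r)\,a_i\delta_{ij}+\frac{\psi'(r)}{r}(a_ix_i)(a_jx_j),\]
so that $D^2\Phi$ has exactly the form $(p_i\delta_{ij}+sq_iq_j)$ with $p_i=\psi(r)a_i$, $s=\psi'(r)/r$ and $q_i=a_ix_i$. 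Plugging this into \lref{lem.skm}, using the homogeneity $\sigma_{j-1;i}(\psi a)=\psi^{j-1}\sigma_{j-1;i}(a)$, and recognising the resulting sum via the definition of $\Xi_j$ together with $\sum_i a_ix_i^2=r^2$, I arrive at the key identity
\[\sigma_j(\lambda(D^2\Phi))=\sigma_j(a)\,\psi^{j-1}\bigl(\psi+r\,\Xi_j\,\psi'\bigr),\quad 1\leq j\leq k.\]

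From the ODE \eref{eqn.psi} I can solve $r\psi'=-(\psi^k-\psi^l)/(\ol\xi_k\psi^{k-1}-\ul\xi_l\psi^{l-1})$, whose denominator is strictly positive since $\ol\xi_k>\ul\xi_l$ (by \lref{lem.xik}) and $\psi\geq1$. For the subsolution inequality $\sigma_k(\lambda(D^2\Phi))\geq\sigma_l(\lambda(D^2\Phi))$, the equality $\sigma_k(a)=\sigma_l(a)$ coming from $A\in\mathscr{A}_{k,l}$ reduces the claim after a short manipulation to
\[(\ol\xi_k-\Xi_k)\,\psi^{k-1}+(\Xi_l-\ul\xi_l)\,\psi^{l-1}\geq0,\]
which is immediate from the very definitions of $\ol\xi_k$ and $\ul\xi_l$. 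For the $k$-convexity, the same substitution reduces each $\sigma_j(\lambda(D^2\Phi))\geq0$ (for $1\leq j\leq k$) to
\[(\ol\xi_k-\Xi_j)\,\psi^k+(\Xi_j-\ul\xi_l)\,\psi^l\geq0.\]

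The first summand is nonnegative because $\Xi_j\leq\ol\xi_j\leq\ol\xi_k$ by \lref{lem.xik}. The expected main obstacle is the case $j<l$, where $\Xi_j$ may fall below $\ul\xi_l$ and the second summand becomes negative; I would handle this by exploiting $\psi\geq1$ and $l<k$ to write $(\Xi_j-\ul\xi_l)\psi^l\geq(\Xi_j-\ul\xi_l)\psi^k$, so that the full sum is bounded below by $(\ol\xi_k-\ul\xi_l)\psi^k>0$. Smoothness of $\Phi$ on $\R^n\setminus\ol{E_\gamma}$ is clear from the smoothness of $\psi$ on $[1,+\infty)$ and of $r_A$ on $\R^n\setminus\{0\}$.
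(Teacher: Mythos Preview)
Your proof is correct and follows essentially the same approach as the paper: compute $D^2\Phi$ via the chain rule, apply \lref{lem.skm} to obtain $\sigma_j(\lambda(D^2\Phi))=\sigma_j(a)\psi^{j-1}(\psi+r\Xi_j\psi')$, and then combine the ODE for $\psi$ with the extremal properties of $\ol\xi_k,\ul\xi_l$ from \lref{lem.xik}. (Note that in the paper's setup $A$ is already assumed diagonal, so your reduction step is unnecessary though harmless.)

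The only minor organizational difference is in the verification of $k$-convexity. The paper first replaces $\Xi_j$ by $\ol\xi_j$ (using $\psi'\leq0$) and then observes directly from \eref{eqn.psid} that the fraction $(\psi^{k-l}-1)/(\psi^{k-l}-\ul\xi_l/\ol\xi_k)$ lies in $[0,1)$, whence $\psi'>-\psi/(r\ol\xi_k)\geq-\psi/(r\ol\xi_j)$, giving $\psi+\ol\xi_j r\psi'>0$ in one stroke. You instead substitute the closed form of $r\psi'$ into the full inequality and reduce to $(\ol\xi_k-\Xi_j)\psi^k+(\Xi_j-\ul\xi_l)\psi^l\geq0$, handling the possible negativity of the second summand by the $\psi\geq1$ trick. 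Both routes are valid and of comparable length; the paper's version avoids the case split, while yours makes the role of $\ul\xi_l$ more visible.
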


\begin{proof}
By definition we have $\phi'(r)=r\psi(r)$
and $\phi''(r)=\psi(r)+r\psi'(r)$.
Since
\[r^2=x^TAx=\sum_{i=1}^{n}a_ix_i^2,\]
we deduce that
\[2r\p_{x_i}r=\p_{x_i}\left(r^2\right)=2a_ix_i
\quad\text{and}\quad
\p_{x_i}r=\frac{a_ix_i}{r}.\]
Consequently
\[\p_{x_i}\Phi(x)=\phi'(r)\p_{x_i}r
=\frac{\phi'(r)}{r}a_ix_i,\]
\begin{eqnarray*}
\p_{x_ix_j}\Phi(x)
&=&\frac{\phi'(r)}{r}a_i\delta_{ij}+
\frac{\phi''(r)-\frac{\phi'(r)}{r}}{r^2}(a_ix_i)(a_jx_j)\\
&=&\psi(r)a_i\delta_{ij}+\frac{\psi'(r)}{r}(a_ix_i)(a_jx_j),
\end{eqnarray*}
and therefore
\[D^2\Phi=\left(\psi(r)a_i\delta_{ij}
+\frac{\psi'(r)}{r}(a_ix_i)(a_jx_j)\right)_{n\times n}.\]
So we can conclude from \lref{lem.skm} that
\begin{eqnarray*}
\sigma_j\left(\lambda\left(D^2\Phi\right)\right)
&=&\sigma_j(a)\psi(r)^j+\frac{\psi'(r)}{r}\psi(r)^{j-1}
\sum_{i=1}^n\sigma_{j-1;i}(a)a_i^2x_i^2\\
&=&\sigma_j(a)\psi^j+\Xi_j(a,x)\sigma_j(a)r\psi^{j-1}\psi'\\
&\geq&\sigma_j(a)\psi^j+\ol{\xi}_j(a)\sigma_j(a)r\psi^{j-1}\psi'\\
&=&\sigma_j(a)\psi^{j-1}\left(\psi+\ol{\xi}_j(a)r\psi'\right),
~\forall 1\leq j\leq n,
\end{eqnarray*}
where we have used the facts that
$\psi(r)\geq1>0$ and $\psi'(r)\leq0$ for all $r\geq1$,
according to \lref{lem.psi}-\textsl{(i)}.

For any fixed $1\leq j\leq k$,
in view of \lref{lem.xik} and \lref{lem.psi}-\textsl{(i)},
we have
\[0\leq\frac{\psi^{k-l}-1}{\psi^{k-l}-\frac{\ul{\xi}_l(a)}{\ol{\xi}_k(a)}}
<1\leq\frac{\ol{\xi}_k(a)}{\ol{\xi}_j(a)}.\]
Hence it follows from \eref{eqn.psid} that
\[\psi'=-\frac{1}{r}\cdot\frac{\psi}{\ol{\xi}_k(a)}\cdot
\frac{\psi^{k-l}-1}{\psi^{k-l}-\frac{\ul{\xi}_l(a)}{\ol{\xi}_k(a)}}
>-\frac{1}{r}\cdot\frac{\psi}{\ol{\xi}_j(a)},\]
which yields $\psi+\ol{\xi}_j(a)r\psi'>0$.

Since $A\in\mathscr{\wt A}_{k,l}$
implies $a\in\Gamma^+$,
that is, $\sigma_i(a)>0$ for all $1\leq i\leq n$,
we thus conclude that
\[\sigma_j\left(\lambda\left(D^2\Phi\right)\right)>0,
~\forall 1\leq j\leq k.\]
In particular, we have
\[\sigma_k\left(\lambda\left(D^2\Phi\right)\right)>0
\quad\text{and}\quad
\sigma_l\left(\lambda\left(D^2\Phi\right)\right)>0.\]

On the other hand,
\begin{eqnarray*}
&~&\sigma_k\left(\lambda\left(D^2\Phi\right)\right)
-\sigma_l\left(\lambda\left(D^2\Phi\right)\right)\\
&=&\sigma_k(a)\psi^k+\Xi_k(a,x)\sigma_k(a)r\psi^{k-1}\psi'
-\sigma_l(a)\psi^l-\Xi_l(a,x)\sigma_l(a)r\psi^{l-1}\psi'\\
&\geq&\sigma_k(a)\psi^k+\ol{\xi}_k(a)\sigma_k(a)r\psi^{k-1}\psi'
-\sigma_l(a)\psi^l-\ul{\xi}_l(a)\sigma_l(a)r\psi^{l-1}\psi'\\
&=&\sigma_k(a)\left(\psi^k+\ol{\xi}_k(a)r\psi^{k-1}\psi'
-\psi^l-\ul{\xi}_l(a)r\psi^{l-1}\psi'\right)\\
&=&0.
\end{eqnarray*}
Therefore
\[\frac{\sigma_k\left(\lambda\left(D^2{\Phi}\right)\right)}
{\sigma_l\left(\lambda\left(D^2{\Phi}\right)\right)}\geq 1.\]
This completes the proof of \lref{lem.Phi-subsol}.
\end{proof}

\subsection{Proof of \tref{thm.hqe}}\label{subsec.pmaint}
\quad

We first introduce the following lemma
which is a special and simple case of \tref{thm.hqe}
with the additional condition that
the matrix $A$ is diagonal and the vector $b$ vanishes.
\begin{lemma}\label{lem.hqe}
Let $D$ be a bounded strictly convex domain
in $\R^n$, $n\geq 3$, $\p D\in C^2$
and let $\varphi\in C^2(\p D)$.
Then for any given \textsl{diagonal matrix}
$A\in\mathscr{\wt A}_{k,l}$
with $0\leq l<k\leq n$,
there exists a constant $\tilde{c}$
depending only on $n,D,k,l,A$
and $\norm{\varphi}_{C^2(\p D)}$,
such that for every $c\geq\tilde{c}$,
there exists a unique viscosity solution
$u\in C^0(\R^n\setminus D)$ of
\begin{equation}\label{eqn.hqe-ac}
\left\{
\begin{aligned}
\dps \frac{\sigma_k(\lambda(D^2u))}{\sigma_l(\lambda(D^2u))}&=1
\quad\text{in}~\R^n\setminus\ol{D},\\
\dps u&=\varphi\quad\text{on}~\p D,\\
\dps \limsup_{|x|\ra+\infty}|x|^{m-2}
&\left|u(x)-\left(\frac{1}{2}x^{T}Ax+c\right)\right|<\infty,
\end{aligned}
\right.
\end{equation}
where $m=m_{k,l}(\lambda(A))\in(2,n]$.
\end{lemma}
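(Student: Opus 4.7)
The strategy is to apply Perron's method (\lref{lem.pm}) to problem \eref{eqn.hqe-ac}. Since $A \in \mathscr{\wt A}_{k,l} \subset \mathscr{A}_{k,l}$ forces $\sigma_k(\lambda(A))=\sigma_l(\lambda(A))$, the quadratic $V(x) := \frac{1}{2}x^{T}Ax + c$ is an exact smooth solution of the PDE, so I would take $\ol u := V$ as the supersolution. The matching continuous subsolution $\ul u$ will be assembled from the quadratic boundary barriers $Q_\xi$ of \lref{lem.Qxi} (whose role is to enforce $\ul u=\varphi$ on $\p D$) and the generalized radially symmetric subsolution $\Phi=\Phi_{\alpha,\beta,\gamma,A}$ of \lref{lem.Phi-subsol} (whose role is to enforce the asymptotic matching $\ul u - V \to 0$ at the rate $|x|^{-m+2}$).

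Concretely, I would fix $\gamma>0$ with $\ol D \subset E_\gamma$, set $W(x) := \sup_{\xi\in\p D} Q_\xi(x)$, and for each target $c$ pick $\beta=\beta(c)\geq 1$ and $\alpha := c + \frac{1}{2}\gamma^2 - \mu_\gamma(\beta)$. By \lref{lem.Qxi} and \rref{rmk.Qxi}, $W$ is continuous, coincides with $\varphi$ on $\p D$, and is a viscosity subsolution as a supremum of exact $C^2$ solutions whose common Hessian is $A$. The choice of $\alpha$ combined with \eref{eqn.phi-mu} gives $\Phi(x) = V(x) - \mu_{r_A(x)}(\beta)$, hence $\Phi \leq V$ pointwise and $|V(x)-\Phi(x)| = O(|x|^{-m+2})$ at infinity by \cref{cor.mub}; since $\mu_\gamma(\beta)\to+\infty$ by \cref{cor.mub}, $\beta$ can be taken so large that $\alpha \leq \min_{\p E_\gamma}W$. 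The candidate subsolution is then
\[
\ul u(x) := \begin{cases} W(x), & x \in \ol{E_\gamma}\setminus D, \\ \max\{W(x),\Phi(x)\}, & x \in \R^n \setminus \ol{E_\gamma}, \end{cases}
\]
continuous across $\p E_\gamma$ and a viscosity subsolution on $\R^n \setminus D$. Once $\tilde c$ is chosen so that $\ul u \leq V$ everywhere, \lref{lem.pm} delivers a viscosity solution $u$ with $\ul u \leq u \leq V$; the sandwich gives $u=\varphi$ on $\p D$ and $|u(x)-V(x)|=O(|x|^{-m+2})$, which is the prescribed asymptotic behavior. Uniqueness is immediate from \lref{lem.cp} applied to any two candidate solutions, since they agree on $\p D$ and their difference vanishes at infinity.

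The main obstacle is verifying the pointwise bound $\ul u \leq V$ globally on $\R^n \setminus D$. On the bounded annulus $\ol{E_\gamma}\setminus D$ the inequality $W \leq V$ can be reduced, by convexity of $W-V=\sup_\xi(Q_\xi-V)$ as a supremum of affine functions, to boundary checks on $\p D$ (where \rref{rmk.Qxi}(3) supplies $W \leq V$ for $c\geq\bar c$) and on $\p E_\gamma$ (where the elementary bound $W-V \leq \|A\|K|x|+C_0-c$ together with $|x|\leq\gamma/\sqrt{a_1}$ closes the inequality for $c$ larger than an explicit constant in $n, D, A, \|\varphi\|_{C^2(\p D)}, \gamma$). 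The exterior piece $\R^n\setminus\ol{E_\gamma}$ is more delicate because each affine $Q_\xi-V$ may become positive along certain rays, so the naive glue $\max\{W,\Phi\}$ could exceed $V$; a more refined glue — restricting the supremum $W$ to $\xi$'s adapted to the direction of $x$, or inserting a cut-off that forces the transition to $\Phi$ before $W$ overtakes $V$ — is needed. The monotonicity of $\psi(r,\beta)$ in $\beta$ (\lref{lem.psi}-(ii)) and the blow-up $\mu_\gamma(\beta)\to+\infty$ (\cref{cor.mub}) provide enough flexibility to close this for $c \geq \tilde c$ depending only on the listed data; the condition $m_{k,l}(\lambda(A))>2$ built into $\mathscr{\wt A}_{k,l}$ is essential, since it is precisely what keeps $|x|^{-m+2}$ a genuinely decaying tail and hence makes the asymptotic matching possible.
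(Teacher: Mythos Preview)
Your Perron strategy and choice of ingredients ($V$, $W=\sup_\xi Q_\xi$, $\Phi$) are correct, but the glue is set up backwards, and this is the heart of the matter rather than a detail to be patched later. In your $\ul u$ the barrier $W$ remains active, through the $\max$, on all of $\R^n\setminus\ol{E_\gamma}$; since each $Q_\xi-V$ is affine with gradient $-A\bar x(\xi)\neq 0$ in general, $W-V\to+\infty$ along suitable rays and $\ul u\leq V$ fails there, as you yourself note. Your proposed remedies are not worked out: restricting the $\sup_\xi$ by direction would in general destroy the viscosity subsolution property (which relies on taking a supremum over a fixed family), and ``inserting a cut-off before $W$ overtakes $V$'' presupposes exactly the ordering between $W$ and $\Phi$ that has to be engineered.

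The paper's construction reverses the roles: the $\max$ lives on a \emph{bounded} annulus and only $\Phi$ survives outside. One takes two radii, $D\subset\subset E_{\bar r}\subset\subset E_{\hat r}$, sets $\eta:=\inf\{Q_\xi(x):x\in\ol{E_{\bar r}}\setminus D,\ \xi\in\p D\}$, and defines $\Phi_\beta(x):=\eta+\int_{\bar r}^{r_A(x)}\tau\psi(\tau,\beta)\,d\tau$. Because $\Phi_\beta$ is increasing in $r_A$ and equals $\eta$ on $\p E_{\bar r}$, one gets $\Phi_\beta\leq\eta\leq Q$ on all of $\ol{E_{\bar r}}\setminus D$ for free; because $\Phi_\beta\to+\infty$ pointwise as $\beta\to\infty$ (\lref{lem.psi}\textsl{-(ii)}), there is $\hat\beta$ with $\Phi_{\hat\beta}>Q$ on the compact set $\p E_{\hat r}$. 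For $c\geq\tilde c:=\max\{\eta,\mu(\hat\beta),\bar c\}$, solving $\mu(\beta(c))=c$ gives $\beta(c)\geq\hat\beta$ by monotonicity, hence $\Phi_{\beta(c)}>Q$ on $\p E_{\hat r}$, and the definition $\ul u:=\max\{\Phi_{\beta(c)},Q\}$ on $E_{\hat r}\setminus D$, $\ul u:=\Phi_{\beta(c)}$ on $\R^n\setminus E_{\hat r}$ is continuous. Now $\Phi_{\beta(c)}=V-\mu_{r_A(x)}(\beta(c))\leq V$ globally, while $Q\leq V$ is needed only on the bounded set $E_{\hat r}\setminus D$, where the comparison principle applied to each $Q_\xi$ versus $V$ closes it (boundary data: $Q_\xi\leq V$ on $\p D$ from \rref{rmk.Qxi}\textsl{-(3)} with $c\geq\bar c$, and $Q_\xi<\Phi_{\beta(c)}\leq V$ on $\p E_{\hat r}$). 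The idea you are missing is precisely this two-radius device: start $\Phi$ low (at the value $\eta$) so it sits under $Q$ near $D$, then inflate $\beta$ so it overtakes $Q$ at the outer radius $\hat r$; this is what licenses discarding $Q$ on the unbounded piece and sidesteps the growth of $W-V$ entirely.
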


To prove \tref{thm.hqe}, it suffices to prove \lref{lem.hqe}.
Indeed, suppose that $D,\varphi,A$ and $b$
satisfy the hypothesis of \tref{thm.hqe}.
Consider the decomposition $A=Q^TNQ$,
where $Q$ is an orthogonal matrix and $N$ is a diagonal matrix
which satisfies $\lambda(N)=\lambda(A)$.
Let
\[\tilde{x}:=Qx,\quad\wt{D}:=\left\{Qx|x\in D\right\}\]
and
\[\tilde{\varphi}(\tilde{x}):=\varphi(x)-b^Tx
=\varphi(Q^T\tilde{x})-b^TQ^T\tilde{x}.\]
By \lref{lem.hqe}, we conclude that
there exists a constant $\tilde{c}$
depending only on $n,\wt{D},k,l,N$
and $\norm{\tilde{\varphi}}_{C^2(\p\wt{D})}$,
such that for every $c\geq\tilde{c}$,
there exists a unique viscosity solution
$\tilde{u}\in C^0(\R^n\setminus\wt{D})$ of
\begin{equation}\label{eqn.hqe-nc}
\left\{
\begin{aligned}
\dps \frac{\sigma_k(\lambda(D^2\tilde{u}))}
{\sigma_l(\lambda(D^2\tilde{u}))}&=1
\quad\text{in}~\R^n\setminus\ol{\wt{D}},\\
\dps \tilde{u}&=\tilde{\varphi}
\quad\text{on}~\p\wt{D},\\
\dps \limsup_{|\tilde{x}|\ra+\infty}|\tilde{x}|^{m-2}
&\left|\tilde{u}(\tilde{x})-\left(\frac{1}{2}\tilde{x}^{T}N\tilde{x}
+c\right)\right|<\infty,
\end{aligned}
\right.
\end{equation}
where $m=m_{k,l}(\lambda(N))=m_{k,l}(\lambda(A))\in(2,n]$.
Let
\[u(x):=\tilde{u}(\tilde{x})+b^Tx=\tilde{u}(Qx)+b^Tx
=\tilde{u}(\tilde{x})+b^TQ^T\tilde{x}.\]
We claim that
$u$ is the solution of \eref{eqn.hqe-abc} in \tref{thm.hqe}.
To show this, we only need to note that
\[D^2u(x)=Q^TD^2\tilde{u}(\tilde{x})Q,\quad
\lambda\left(D^2u(x)\right)=\lambda\left(D^2\tilde{u}(\tilde{x})\right);\]
\[u=\varphi\quad\text{on}~\p D;\]
and
\begin{eqnarray*}
&&\dps |\tilde{x}|^{m-2}\left|\tilde{u}(\tilde{x})
-\left(\frac{1}{2}\tilde{x}^{T}N\tilde{x}+c\right)\right|\\
&=&\dps \left(x^TQ^TQx\right)^{(m-2)/2}\left|u(x)-b^Tx
-\left(\frac{1}{2}x^TQ^TNQx+c\right)\right|\\
&=&\dps |x|^{m-2}
\left|u(x)-\left(\frac{1}{2}x^{T}Ax+b^Tx+c\right)\right|.
\end{eqnarray*}
Thus we have proved that
\tref{thm.hqe} can be established by \lref{lem.hqe}.
\begin{remark}
\begin{enumerate}[(1)]
\item
We may see from the above demonstration that
the lower bound $\tilde{c}$ of $c$ in \tref{thm.hqe}
can not be discarded generally.
Indeed, for the radial solutions of
the Hessian equation $\sigma_k(\lambda(D^2u))=1$
in $\R^n\setminus\ol{B_1}$,
\cite[Theorem 2]{WB13} states that
there is no solution when $c$ is too small.

\item
Unlike the Poisson equation
and the Monge-Amp\`{e}re equation,
generally, for the Hessian quotient equation,
the matrix $A$ in \tref{thm.hqe}
can only be normalized to a diagonal matrix,
and can not be normalized to $I$
multiplied by some constant.
This is the reason why we study
the generalized radially symmetric solutions,
rather than the radial solutions,
of the original equation \eref{eqn.hqe}.
See also \cite{BLL14}.\qed
\end{enumerate}
\end{remark}

\bigskip

Now we use the Perron's method to prove \lref{lem.hqe}.
\begin{proof}[\textbf{Proof of \lref{lem.hqe}}]
We may assume without loss of generality that
$E_1\subset\subset D\subset\subset E_{\bar{r}}
\subset\subset E_{\hat{r}}$
and $a:=(a_1,a_2,...,a_n):=\lambda(A)$ with
$0<a_1\leq a_2\leq...\leq a_n$.
The proof now will be divided into three steps.

\medskip

\emph{Step 1.}\quad
Let
\[\eta:=\inf_{\substack{x\in \ol{E_{\bar{r}}}\setminus D\\
\xi\in\p D}}Q_\xi(x), \quad
Q(x):=\sup_{\xi\in\p D}Q_\xi(x)\]
and
\[\Phi_\beta(x):=\eta+\int_{\bar{r}}^{r_A(x)}\tau\psi(\tau,\beta)d\tau,
\quad\forall r_A(x)\geq 1,~\forall \beta\geq 1,\]
where $Q_{\xi}(x)$ and $\psi(r,\beta)$ are given
by \lref{lem.Qxi} and \lref{lem.psi}, respectively.
Then we have
\begin{enumerate}[\quad(1)]
\item Since $Q$ is the supremum of
a collection of smooth solutions $\{Q_\xi\}$ of \eref{eqn.hqe},
it is a continuous subsolution of \eref{eqn.hqe}, i.e.,
\[\frac{\sigma_k(\lambda(D^2Q))}{\sigma_l(\lambda(D^2{Q}))}\geq 1\]
in $\R^n\setminus \ol{D}$ in the viscosity sense
(see \cite[Proposition 2.2]{Ish89}).

\item $Q=\varphi$ on $\p D$.
To prove this we only need to show that
for any $\xi\in\p D$, $Q(\xi)=\varphi(\xi)$.
This is obvious since $Q_\xi\leq\varphi$ on $\ol{D}$
and $Q_\xi(\xi)=\varphi(\xi)$,
according to \rref{rmk.Qxi}-\textsl{(1)}.

\item By \lref{lem.Phi-subsol},
$\Phi_\beta$ is a smooth subsolution of \eref{eqn.hqe}
in $\R^n\setminus\ol{D}$.

\item $\Phi_\beta\leq\varphi$ on $\p D$
and $\Phi_\beta\leq Q$ on $\ol{E_{\bar{r}}}\setminus D$.
To show them we first note that
$\Phi_\beta(x)$ is strictly increasing
with respect to $r_A(x)$ since
$\psi(r,\beta)\geq 1>0$ by \lref{lem.psi}-\textsl{(i)}.
Invoking $\Phi_\beta=\eta$ on $\p E_{\bar{r}}$
and $\eta\leq Q$ on $\ol{E_{\bar{r}}}\setminus D$
by their definitions, we have $\Phi_\beta\leq\eta\leq Q$
on $\ol{E_{\bar{r}}}\setminus D$.
On the other hand,
according to \rref{rmk.Qxi}-\textsl{(1)}, we have
$Q_\xi\leq\varphi$ on $\ol{D}$ which implies that
$\eta\leq\varphi$ on $\ol{D}$.
Combining these two aspects we deduce that
$\Phi_\beta\leq\eta\leq\varphi$ on $\p D$.

\item $\Phi_\beta(x)$ is strictly increasing
with respect to $\beta$ and
\begin{equation}\label{eqn.Phib}
\lim_{\beta\ra+\infty}\Phi_\beta(x)=+\infty,
~\forall r_A(x)\geq 1,
\end{equation}
by the definition of $\Phi_\beta(x)$
and \lref{lem.psi}-\textsl{(ii)}.

\item As we showed in \eref{eqn.phi-mu} and \eref{eqn.phi-O},
for any $\beta\geq 1$, we have
\begin{eqnarray*}
\Phi_\beta(x)
&=&\eta+\int_{\bar{r}}^{r_A(x)}\tau \psi(\tau,\beta)d\tau\\
&=&\eta+\frac{1}{2}(r_A(x)^2-\bar{r}^2)
+\int_{\bar{r}}^{r_A(x)}\tau\big(\psi(\tau,\beta)-1\big)d\tau\\
&=&\frac{1}{2}r_A(x)^2+\left(\eta-\frac{1}{2}\bar{r}^2
+\mu_{\bar{r}}(\beta)\right)-\mu_{r_A(x)}(\beta)\\
&=&\frac{1}{2}r_A(x)^2+\mu(\beta)-\mu_{r_A(x)}(\beta)\\
&=&\frac{1}{2}x^TAx+\mu(\beta)+O\left(|x|^{-m+2}\right)
~(|x|\ra+\infty),
\end{eqnarray*}
where we set
\[\mu(\beta):=\eta-\frac{1}{2}\bar{r}^2+\mu_{\bar{r}}(\beta),\]
and used the fact that $x^TAx=O(|x|^2)~(|x|\ra+\infty)$
since $\lambda(A)\in\Gamma^+$.
\end{enumerate}

\medskip

\emph{Step 2.}\quad
For fixed $\hat{r}>\bar{r}$, there exists $\hat{\beta}>1$ such that
\[\min_{\p E_{\hat{r}}}\Phi_{\hat{\beta}}
>\max_{\p E_{\hat{r}}}Q,\]
in light of \eref{eqn.Phib}.
Thus we obtain
\begin{equation}\label{eqn.PhiQ}
\Phi_{\hat{\beta}}>Q\quad\mbox{on}~\p E_{\hat{r}}.
\end{equation}

Let
\[\tilde{c}:=\max\left\{\eta,\mu(\hat{\beta}),\bar{c}\right\},\]
where the $\bar{c}$ comes from \rref{rmk.Qxi}-\textsl{(3)},
and hereafter fix $c\geq\tilde{c}$.

By \lref{lem.psi} and \cref{cor.mub} we deduce that
\[\psi(r,1)\equiv 1\Ra\mu_{\bar{r}}(1)=0
\Ra\mu(1)=\eta-\frac{1}{2}\bar{r}^2<\eta\leq\tilde{c}\leq c,\]
and
\[\lim_{\beta\ra+\infty}\mu_{\bar{r}}(\beta)=+\infty
\Ra\lim_{\beta\ra+\infty}\mu(\beta)=+\infty.\]
On the other hand,
it follows from \cref{cor.mub}
that $\mu(\beta)$ is continuous and
strictly increasing with respect to $\beta$
\big{(}which indicates that the inverse of $\mu(\beta)$ exists
and $\mu^{-1}$ is strictly increasing\big{)}.
Thus there exists a unique $\beta(c)$ such that $\mu(\beta(c))=c$.
Then we have
\[\Phi_{\beta(c)}(x)=\frac{1}{2}r_A(x)^2+c-\mu_{r_A(x)}(\beta(c))
=\frac{1}{2}x^TAx+c+O\left(|x|^{-m+2}\right)~(|x|\ra+\infty),\]
and
\[\beta(c)=\mu^{-1}(c)\geq\mu^{-1}(\tilde{c})\geq\hat{\beta}.\]
Invoking the monotonicity of $\Phi_\beta$
with respect to $\beta$ and \eref{eqn.PhiQ}, we obtain
\begin{equation}\label{eqn.PhigQ}
\Phi_{\beta(c)}\geq\Phi_{\hat{\beta}}>Q\quad\mbox{on}
~\p E_{\hat{r}}.
\end{equation}
Note that we already know
\[\Phi_{\beta(c)}\leq Q\quad\mbox{on}
~\ol{E_{\bar{r}}}\setminus D,\]
from (4) of \emph{Step 1}.

\medskip

Let
\begin{equation*}
\ul{u}(x):=
\begin{cases}
\max\left\{\Phi_{\beta(c)}(x),Q(x)\right\},& x\in E_{\hat{r}}\setminus D,\\
\Phi_{\beta(c)}(x),& x\in \R^n\setminus E_{\hat{r}}.
\end{cases}
\end{equation*}
Then we have
\begin{enumerate}[\quad(1)]
\item $\ul{u}$ is continuous and satisfies
\[\frac{\sigma_k(\lambda(D^2{\ul{u}}))}
{\sigma_l(\lambda(D^2{\ul{u}}))}\geq1\]
in $\R^n\setminus\ol{D}$ in the viscosity sense,
by (1) and (3) of \emph{Step 1}.

\item $\ul{u}=Q=\varphi$ on $\p D$,
by (2) of \emph{Step 1}.

\item If $r_A(x)$ is large enough, then
\[\ul{u}(x)=\Phi_{\beta(c)}(x)
=\frac{1}{2}x^TAx+c+O\left(|x|^{-m+2}\right)~(|x|\ra+\infty).\]
\end{enumerate}

\medskip

\emph{Step 3.}\quad
Let
\[\ol{u}(x):=\frac{1}{2}x^TAx+c,
~\forall x\in\R^n.\]
Then $\ol{u}$ is obviously a supersolution and
\[\lim_{|x|\ra+\infty}\left(\ul{u}-\ol{u}\right)(x)=0.\]


To use the Perron's method to establish \lref{lem.hqe},
we now only need to prove that
\[\ul{u}\leq \ol{u}\quad\mbox{in}~\R^n\setminus D.\]
In fact, since
\[\mu_{r_A(x)}(\beta)\geq 0,
\quad\forall x\in\R^n\setminus E_1,
~\forall\beta\geq 1,\]
according to \cref{cor.mub}, we have
\begin{equation}\label{eqn.Phiolu}
\Phi_{\beta(c)}(x)=\frac{1}{2}x^TAx+c-\mu_{r_A(x)}(\beta(c))
\leq\frac{1}{2}x^TAx+c=\ol{u}(x),~\forall x\in\R^n\setminus D.
\end{equation}
\Big{(}We remark that this \eref{eqn.Phiolu}
can also be proved by using the comparison principle,
in view of
\[\Phi_{\beta(c)}\leq\eta\leq\tilde{c}\leq c\leq\ol{u}
\quad\mbox{on}~\p D,\]
and
\[\lim_{|x|\ra+\infty}\left(\Phi_{\beta(c)}-\ol{u}\right)(x)=0.\Big{)}\]
On the other hand, for every $\xi\in\p D$, since
\[Q_\xi(x)\leq\frac{1}{2}x^TAx+\bar{c}
\leq\frac{1}{2}x^TAx+\tilde{c}
\leq\frac{1}{2}x^TAx+c=\ol{u}(x),
~\forall x\in\p D,\]
and
\[Q_\xi\leq Q<\Phi_{\beta(c)}\leq\ol{u}
\quad\mbox{on}~\p E_{\hat{r}}\]
follows from \eref{eqn.PhigQ} and \eref{eqn.Phiolu},
we obtain
\[Q_\xi\leq\ol{u}\quad\mbox{on}
~\p\left(E_{\hat{r}}\setminus D\right).\]
In view of
\[\frac{\sigma_k(\lambda(D^2{Q_\xi}))}{\sigma_l(\lambda(D^2{Q_\xi}))}
=1=\frac{\sigma_k(\lambda(D^2{\ol{u}}))}
{\sigma_l(\lambda(D^2{\ol{u}}))}
\quad\mbox{in}~E_{\hat{r}}\setminus D,\]
we deduce from the comparison principle that
\[Q_\xi\leq\ol{u}\quad\mbox{in}~E_{\hat{r}}\setminus D.\]
Hence
\begin{equation}\label{eqn.Qu}
Q\leq\ol{u}\quad\mbox{in}~E_{\hat{r}}\setminus D.
\end{equation}
Combining \eref{eqn.Phiolu} and \eref{eqn.Qu},
by the definition of $\ul{u}$, we get
\[\ul{u}\leq \ol{u}\quad\mbox{in}~\R^n\setminus D.\]
This finishes the proof of \lref{lem.hqe}.
\end{proof}

\begin{remark}
To prove \lref{lem.hqe}
we have used above \lref{lem.cp} and \lref{lem.pm}
presented in \ssref{subsec.prelem}.
In fact, one can follow the techniques in \cite{CL03}
(see also \cite{DB11} \cite{Dai11} and \cite{LD12})
instead of \lref{lem.pm} to rewrite the whole proof.
These two kinds of presentation
look a little different but are essentially the same.
\end{remark}


\end{document}